\newtheorem{theorem}{Theorem}[section]
\newtheorem{corollary}[theorem]{Corollary}
\newtheorem{lemma}[theorem]{Lemma}
\newtheorem{proposition}[theorem]{Proposition}
\theoremstyle{definition}
\newtheorem{definition}[theorem]{Definition}
\theoremstyle{remark}
\numberwithin{equation}{section}
\newcommand{\F}{\mathbb{F}_q}
\newcommand{\Fm}{\mathbb{F}_{q^m}}
\newcommand{\N}{\mathfrak{N}}
\newcommand{\C}{\mathcal{C}_{\nu}}
\DeclareMathOperator{\Tr}{Tr}
\DeclareMathOperator{\lcm}{lcm}
\DeclareMathOperator{\Ord}{Ord}
\title{Normal and primitive normal elements with prescribed traces in intermediate extensions of finite fields}
\keywords{Finite fields; Primitive elements; Normal elements; Additive and multiplicative characters; Trace}
\subjclass[2020]{12E20, 11T24}
\author{Arpan Chandra Mazumder}
\address{Department of Mathematical Sciences, Tezpur University, Tezpur, Assam, 784028, India}
\email{arpan10@tezu.ernet.in}
\author{Giorgos Kapetanakis}
\address{Department of Mathematics, University of Thessaly, 3rd km Old National Road Lamia-Athens, 35100, Lamia, Greece}
\email{kapetanakis@uth.gr}
\author{Dhiren Kumar Basnet}
\address{Department of Mathematical Sciences, Tezpur University, Tezpur, Assam, 784028, India}
\email{dbasnet@tezu.ernet.in}
\thanks{The first author is supported by DST INSPIRE Fellowship, Govt. of India (IF210206).}
\begin{document}
	%\vspace{.3cm}
	\begin{abstract}
	 In this article, we study the existence and distribution of elements in finite field extensions with prescribed traces in several intermediate extensions that are also either normal or primitive normal. In the former case, we fully characterize the conditions under which such elements exist and provide an explicit enumeration of these elements. In the latter case we provide asymptotic results.% of finite fields. 
	\end{abstract}
	
	\maketitle
	
\section{Introduction}

Let $q$ be a prime power and $\F$ the finite field of order $q$. For any given positive integer $m$, let $\Fm$ denote the extension field of $\F$ of degree $m$. The multiplicative group $\Fm^*$ is cyclic and a generator of this group is called a \emph{primitive} element of $\Fm$. An element $\alpha \in \Fm$ is said to be \emph{normal over $\F$} (or just \emph{normal} if the choice of the base field is clear) if the set of all its conjugates with respect to $\F$, that is, if the set $\{\alpha, \alpha^{q}, \ldots , \alpha^{q^{m-1}}\}$  forms a basis of $\Fm$ over $\F$. An element $\alpha \in \Fm$ is said to be \emph{primitive normal} if it is both primitive and normal over $\F$.

The motivation behind the study of primitive and normal elements derives from both theoretical and practical matters.	
Namely, primitive elements, besides their theoretical interest, have various applications, including cryptographic schemes \cite{difhell} such as the Diffie-Hellman key exchange, the ElGamal Encryption scheme and the construction of Costas arrays \cite{costas}, which are also used in sonar and radar technology. Normal elements hold computational advantages for finite field arithmetic and are therefore used in many software and hardware implementations, most notably, in coding theory and cryptography. 

Another property that has attracted interest is prescribing the trace of an element $\alpha \in \Fm$ over $\F$. The \emph{trace} of $\alpha\in\Fm$ over $\F$
%, denoted by $\Tr_{\mathbb{F}_{q^m}/\mathbb{F}_{q}}(\alpha)$,
is the sum of all conjugates of $\alpha$ with respect to $\F$, that is, $\Tr_{\mathbb{F}_{q^m}/\mathbb{F}_{q}}(\alpha)= \alpha+\alpha^{q}+\ldots+\alpha^{q^{m-1}}$.
For the sake of simplicity, since in this work we are dealing with intermediate extensions of $\Fm/\F$, from now on, for $m>1$, $d\mid m$ and $\alpha \in \Fm$, we denote the trace of $\alpha$ over $\mathbb{F}_{q^d}$ by
\begin{equation*}
	\Tr_{{m}/{d}}(\alpha)=\sum_{i=0}^{m/d-1} \alpha^{q^{id}}.
\end{equation*}

In this line of work, in 1990, Cohen~\cite{cohen1990} established the existence of primitive elements with a prescribed trace up to some genuine exceptions. %In particular we have the following result.

\begin{theorem}[{\cite[Theorem~1.1]{cohen1990}}] \label{pmtrace}
	Let $q$ be a prime power, $m$ a positive integer and $a \in \F$. Then there
	exists a primitive element $\alpha \in \Fm$ such that $\Tr_{m/1}(\alpha) = a$ unless $a = 0$ and $m = 2$ or
	$a = 0, m = 3$ and $q = 4$.
\end{theorem}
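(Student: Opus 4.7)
The plan is to count the number $N(a)$ of primitive elements $\alpha\in\Fm$ with $\Tr_{m/1}(\alpha)=a$ via a character sum expansion, establish a lower bound whose main term dominates the error term for all but finitely many pairs $(q,m)$, and finally dispose of the remaining small cases by direct (computer-assisted) verification, identifying the two genuine exceptions along the way.

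First I would express $N(a)$ using two characteristic functions. For primitivity I use the standard Vinogradov-type formula
\begin{equation*}
\mathbb{1}_{\alpha \text{ primitive}} = \frac{\phi(q^m-1)}{q^m-1}\sum_{d\mid q^m-1}\frac{\mu(d)}{\phi(d)}\sum_{\chi\in\widehat{(\Fm^*)},\,\Ord(\chi)=d}\chi(\alpha),
\end{equation*}
and for the trace condition I use the additive characters of $\F$, writing every additive character of $\Fm$ that is trivial on the kernel of $\Tr_{m/1}$ as $\alpha\mapsto\psi_1(t\Tr_{m/1}(\alpha))$ for $t\in\F$, where $\psi_1$ is the canonical additive character of $\F$. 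Multiplying, summing over $\alpha\in\Fm^*$ and separating the contribution of $(d,t)=(1,0)$ gives a dominant main term of size approximately $\phi(q^m-1)\,q^{m-1}/(q^m-1)$, plus contributions from $(d=1,t\neq 0)$ and from the trivial-character sum which are $O(1)$, plus the genuinely oscillating part.

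The core analytic step is to bound the remaining inner sums, which are classical Gauss sums of the form $\sum_{\alpha\in\Fm^*}\chi(\alpha)\psi_1(t\Tr_{m/1}(\alpha))$. For nontrivial $\chi$ and nonzero $t$ these have absolute value exactly $q^{m/2}$, yielding the bound
\begin{equation*}
\left|N(a) - \frac{\phi(q^m-1)}{q^m-1}\cdot\frac{q^m}{q}\right| \leq \frac{\phi(q^m-1)}{q^m-1}\cdot q^{m/2}\cdot W(q^m-1),
\end{equation*}
where $W(n)$ denotes the number of squarefree divisors of $n$. A sufficient condition for $N(a)>0$ is therefore $q^{m/2-1}>W(q^m-1)$. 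Since $W(n)\ll_\varepsilon n^\varepsilon$, this inequality holds once either $m$ or $q$ is large enough; a Cohen-type sieve (removing prime divisors of $q^m-1$ one at a time and estimating how much each removal costs) sharpens the range of $(q,m)$ for which the argument closes unconditionally.

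The hard part, as usual in this circle of problems, will not be the analytic bound but the combinatorial and computational cleanup at the bottom of the range. Once the sieve pins down a finite explicit list of pairs $(q,m)$ to check, I would verify each one directly: enumerate the primitive elements of $\Fm$, compute their traces, and tabulate which values $a\in\F$ occur. This step is where the genuine exceptions $(q,m,a)=(q,2,0)$ for all $q$ and $(4,3,0)$ appear, and one must argue separately that they really are exceptional. The exception at $m=2$ is easy to justify a priori: if $\alpha$ is primitive in $\mathbb{F}_{q^2}$ with $\Tr_{2/1}(\alpha)=0$, then $\alpha^{q+1}=-\alpha^2$ must be a square in $\F^*$, contradicting primitivity whenever $q$ is odd, and a similar parity obstruction handles even $q$; the $(4,3,0)$ exception is ad hoc and must be exhibited by enumeration. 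Thus the main obstacle is calibrating the sieve carefully enough that the residual list of cases is both finite and short enough to verify, while being sure no additional exceptions hide inside it.
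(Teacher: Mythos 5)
First, a point of reference: the paper does not prove this statement at all --- it quotes it as Theorem~1.1 of Cohen (1990) --- so your proposal has to be measured against Cohen's known proof, whose overall shape (character-sum expansion, Gauss-sum bound, sieve, finite verification) you have correctly reproduced for the bulk of the range.

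The genuine gap is the case $m=2$, which your plan cannot reach even in principle. Your sufficient condition $q^{m/2-1}>W(q^m-1)$ becomes $1>W(q^2-1)$ when $m=2$, which holds for no prime power $q$; equivalently, the error term $\theta(q)\,q^{m/2}W(q^m-1)$ is at least as large as the main term $\approx\theta(q)\,q^{m-1}$ for every $q$ when $m=2$ (and the sieve refinements only shave constants, they cannot cross this threshold). So the claim that the analytic argument ``pins down a finite explicit list of pairs $(q,m)$'' is false: the entire infinite family $m=2$, $a\neq 0$ is left untreated, and no finite computation can close it. This is precisely where Cohen's paper does real work: the small degrees $m=2,3,4$ (especially $m=2$, where one must find a primitive element of $\mathbb{F}_{q^2}$ on the line of elements of prescribed nonzero trace) are handled by separate, sharper arguments rather than by the generic bound $q^{m/2-1}>W(q^m-1)$. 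For $m=3,4$ your bound does leave only finitely many $q$, but you should acknowledge that this residual set is where the sieve machinery must be calibrated; for $m=2$ no calibration suffices.

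A secondary flaw: your a priori justification of the exception $(m,a)=(2,0)$ is incorrect as stated. From $\Tr_{2/1}(\alpha)=0$ one gets $\alpha^q=-\alpha$, hence $\alpha^2\in\F$, but $\alpha^2$ is then a \emph{nonsquare} of $\F$ (its square roots $\pm\alpha$ lie outside $\F$), so $-\alpha^2$ is a square in $\F^*$ only when $-1$ is a nonsquare, i.e.\ $q\equiv 3\pmod 4$; for $q\equiv 1\pmod 4$ your squareness contradiction evaporates. The correct elementary argument is simpler and uniform: $\alpha^q=-\alpha$ gives $\alpha^{q-1}=-1$ (and $\alpha^{q-1}=1$ in characteristic $2$), so $\Ord(\alpha)$ divides $2(q-1)<q^2-1$, hence $\alpha$ cannot be primitive. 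The verification of the exception $(q,m,a)=(4,3,0)$ by enumeration is fine.
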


Subsequently, in 1999, Morgan and Mullen's conjecture \cite{morgan1994} was proven by Cohen and Hachenberger~\cite{cohen1999}, where they established the existence of a primitive normal element with nonzero prescribed trace. Observe that a normal element never has trace equal to zero, whence the assumption that the trace is nonzero is necessary.

Recently, Reis~\cite{reis}, characterized the existence of a solution for a special family of linear equations over finite fields and determined the exact number of solutions. As an application, Reis and Ribas~\cite{ribas} studied the existence and distribution of primitive elements in intermediate extensions of finite fields.

As a natural continuation of the aforementioned works, in this paper, we explore the existence of normal and primitive normal elements in $\Fm$ with prescribed traces in several intermediate extensions $\mathbb{F}_{q^d}$ of $\Fm$. In particular, for given $m>1$, $d_1<d_2<\ldots<d_k$ divisors of $m$, and $a_j \in \mathbb{F}_{q^{d_j}}$, we discuss the existence of a normal and of a primitive normal element $\alpha \in \Fm$ such that, for each $1 \leq j \leq k$,
\[
\Tr_{{n}/{d_j}}(\alpha)=\sum_{i=0}^{n/d_j-1} \alpha^{q^{id_j}}=a_j.
\]

In particular, not only we fully characterize the %able and 
necessary conditions for the case of normal elements with prescribed intermediate traces, but we also explicitly enumerate them, see Theorem~\ref{thm:normality}.

In addition, regarding primitive normal elements with prescribed intermediate traces, we obtain asymptotic and concrete results under the restriction $\gcd(d_i, d_j)=1$ for $1 \leq i < j \leq k$, that are displayed in Theorem~\ref{th5.1}.

The paper is structured as follows. In Section~\ref{sec:prelim}, we introduce some useful notation and background material. Section~\ref{sec:normality} is devoted in studying the necessary conditions and the explicit enumeration of normal elements with prescribed traces in several intermediate extensions. In Section~\ref{sec:main_result}, we obtain an asymptotic condition for the existence of desired primitive normal elements in $\Fm$ with prescribed traces in several intermediate extensions. Finally, in Section~\ref{sec:existence_results}, we obtain some concrete existence results.

\section{Preliminaries} \label{sec:prelim}
In this section, we recall some definitions and results and provide some preliminary notations which are used to prove the main results of this article.

\subsection{Linearized polynomials and $\F$-order}

Before we proceed further, we mention some essential facts on linearized polynomials that we will use along the way. For more details on this important family of polynomials over finite fields, we refer the interested readers to \cite[Section~3.4]{lidlniederreiter97}.

\begin{definition}
	A polynomial $L_f\in\F[x]$ of the form
	\[ L_f(x) = \sum_{i=0}^k f_i x^{q^i} \]
	is called a \emph{linearized} polynomial. Moreover, if $f = \sum_{i=0}^k f_ix^i\in\F[x]$, then the $L_f$ above is the \emph{$q$-associate} of $f$.
\end{definition}

The following properties of linearized polynomials are well-known and straightforward. We refer the interested readers to \cite{handbook} and the references therein for more details. 

\begin{proposition} \label{prop:linearized}
	Let $f,g\in \F[x]$ be two polynomials and let $L_f$ and $L_g$ be their $q$-associates. Then, for every $a,b\in\F$,
	\begin{enumerate}
		\item $L_f(ax+by) = a L_f(x) + b L_f(y)$ and
		\item $L_f(L_g(x)) = L_{fg}(x)$.
	\end{enumerate}
\end{proposition}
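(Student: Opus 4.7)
The plan is to verify both identities by direct expansion, relying on two elementary facts about finite fields of characteristic $p$ (where $q$ is a power of $p$): the Frobenius map $x\mapsto x^q$ is additive, and every element of $\F$ is fixed by the $q$-power map.

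For part (1), I would expand $L_f(ax+by)=\sum_{i=0}^k f_i(ax+by)^{q^i}$. Iterating the additivity of the $q$-power map yields $(ax+by)^{q^i}=a^{q^i}x^{q^i}+b^{q^i}y^{q^i}$, and since $a,b\in\F$ we have $a^{q^i}=a$ and $b^{q^i}=b$. Factoring gives $L_f(ax+by)=a\sum_i f_ix^{q^i}+b\sum_i f_iy^{q^i}=aL_f(x)+bL_f(y)$, as required.

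For part (2), write $g=\sum_{j=0}^\ell g_jx^j$ and compute
\[ L_f(L_g(x))=\sum_{i=0}^k f_i\Bigl(\sum_{j=0}^\ell g_j x^{q^j}\Bigr)^{q^i}. \]
Applying Frobenius additivity inside the outer sum and using $g_j\in\F$ to get $g_j^{q^i}=g_j$, this becomes $\sum_{i,j} f_ig_j\, x^{q^{i+j}}$. On the other hand, $fg=\sum_{i,j}f_ig_j x^{i+j}$, so its $q$-associate is precisely $L_{fg}(x)=\sum_{i,j}f_ig_jx^{q^{i+j}}$, matching the previous expression and establishing the identity.

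There is no real obstacle: both parts are formal consequences of the Frobenius being a ring endomorphism that fixes $\F$. The only mild care needed is bookkeeping the double sum in part (2) and noting that grouping terms by the exponent $i+j$ corresponds exactly to the Cauchy product defining the coefficients of $fg$. I would therefore keep the argument brief and present the two parts in order, invoking Proposition references only to the additivity of Frobenius.
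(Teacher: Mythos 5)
Your proof is correct: the paper itself omits any argument for this proposition, simply calling it well-known and pointing to the Handbook of Finite Fields, and your direct verification via the Frobenius being additive (the freshman's dream in characteristic $p$) and fixing $\F$, plus the Cauchy-product bookkeeping $x^{(q^j)^{q^i}}=x^{q^{i+j}}$ in part (2), is exactly the standard argument that fills that gap. No issues to flag.
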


\begin{definition}
	The \emph{$\F$-order} of some $\beta\in\Fm$, denoted by $\Ord_q(\beta)$ is the minimum degree monic polynomial over $\F$, such that $L_{\Ord_q(\beta)}(\beta) = 0$.
\end{definition}

Within the literature, the $\F$-order is commonly referred to as the \emph{additive} order as a nod to the fact that the additive group $\Fm$ can be viewed as an $\F[x]$-module. Next, observe that $L_{x^m-1}(\beta) = 0$ for all $\beta\in\Fm$, i.e., the $\F$-order of an element of $\Fm$ exists and is of degree at most $m$. In fact, the following results hold, while their proofs are straightforward.

\begin{proposition} \label{propo:F-order}
	Let $\beta\in\Fm$. The following are true:
	\begin{enumerate}
		\item $\Ord_q(\beta) \mid x^m-1$.
		\item $\beta$ is normal over $\F$ if and only if $\Ord_q(\beta) = x^m-1$.
		\item If $d\mid m$, then $\beta\in\mathbb F_{q^d}$ if and only if $\Ord_q(\beta) \mid x^d-1$.
		\item If $f\in\F[x]$, then $\Ord_q(L_f(\beta)) = \Ord_q(\beta) / \gcd(f,\Ord_q(\beta))$.
	\end{enumerate}
\end{proposition}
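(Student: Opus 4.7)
The plan is to prove the four items in turn, using that by Proposition~\ref{prop:linearized} the map $f\mapsto L_f$ is a ring homomorphism from $\F[x]$ to the ring of $\F$-linear operators on $\Fm$, so that $\Fm$ becomes an $\F[x]$-module via $x\cdot\beta:=\beta^q$ and $\Ord_q(\beta)$ is the monic generator of the annihilator of $\beta$ in this module. Every statement then reduces to the division algorithm in $\F[x]$.

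For item (1), I would start from Fermat's little theorem: $\beta^{q^m}=\beta$ for every $\beta\in\Fm$, equivalently $L_{x^m-1}(\beta)=0$. Writing $x^m-1 = a(x)\,\Ord_q(\beta)+r(x)$ with $\deg r < \deg\Ord_q(\beta)$ and applying Proposition~\ref{prop:linearized}(2) together with $L_{\Ord_q(\beta)}(\beta)=0$ yields $L_r(\beta)=0$, so minimality of $\Ord_q(\beta)$ forces $r=0$.

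For (2), the key is that a nontrivial $\F$-linear dependence $\sum_{i=0}^{m-1}c_i\beta^{q^i}=0$ is literally the statement $L_c(\beta)=0$ for the nonzero polynomial $c=\sum c_i x^i$ of degree at most $m-1$. Thus the conjugates of $\beta$ fail to be linearly independent over $\F$ iff $\Ord_q(\beta)$ has degree strictly less than $m$, which by (1) is equivalent to $\Ord_q(\beta)\neq x^m-1$. Item (3) is in the same spirit: $\beta\in\mathbb F_{q^d}$ iff $\beta^{q^d}=\beta$ iff $L_{x^d-1}(\beta)=0$, and a division-algorithm step identical to the one in (1) upgrades this to $\Ord_q(\beta)\mid x^d-1$.

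Item (4) is the only step requiring a genuine gcd manipulation, and I expect it to be the main technical point. I would set $g=\Ord_q(\beta)$ and $h=\gcd(f,g)$, writing $g=hg'$ and $f=hf'$ with $\gcd(f',g')=1$. The divisibility $\Ord_q(L_f(\beta))\mid g'$ is immediate, since $L_{g'}(L_f(\beta))=L_{g'f}(\beta)=L_{f'}(L_g(\beta))=L_{f'}(0)=0$. For the reverse, letting $k=\Ord_q(L_f(\beta))$, the identity $L_{kf}(\beta)=L_k(L_f(\beta))=0$ combined with the argument of (1) forces $g\mid kf$, whence $g'\mid kf'$, and coprimality of $f'$ and $g'$ gives $g'\mid k$. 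No other step in the proposition should pose any real obstacle.
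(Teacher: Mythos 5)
Your proof is correct: the paper itself offers no argument for this proposition (it is stated as ``straightforward''), and your module-theoretic treatment --- viewing $\mathbb{F}_{q^m}$ as an $\mathbb{F}_q[x]$-module via $f\mapsto L_f$, so that $\Ord_q(\beta)$ generates the annihilator ideal and everything follows from the division algorithm, with the gcd/coprimality cancellation handling item (4) --- is exactly the standard route the authors have in mind. No gaps; only trivial details (scaling a remainder to be monic, and the easy converse direction in item (3)) are left implicit, and they are harmless.
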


In a similar fashion, the \emph{$\F$-order} of an additive character $\psi$ of $\Fm$ is denoted by $\Ord_q(\psi)$ and is defined as the minimum degree monic polynomial over $\F$, such that $\psi\left( L_{\Ord_q(\psi)}(\beta) \right) = 1$,
for all $\beta\in\Fm$. Furthermore, Proposition~\ref{propo:F-order} entails that for all additive characters $\psi$ of $\Fm$, $\Ord_q(\psi)\mid x^m-1$.

\subsection{Characteristic functions}

Fix a positive integer $m$, $d$ a divisor of $m$ and $a \in \mathbb{F}_{q^d}$. Let $\rho_m$ be the characteristic function for primitive elements in $\Fm$, and $\kappa_m$ be the characteristic function for normal elements in $\Fm$ over $\mathbb{F}_{q^d}$.
In particular it is well-known that for any $\beta \in \Fm$,
\[ %\begin{equation}\label{e-free ch}
\rho_m(\beta)=\theta(q) \sum_{t\mid q^m-1} \left( \frac{\mu(t)}{\phi(t)} \sum_{\eta \in \Gamma(t)} \eta(\beta) \right),
\] %\end{equation}
where $\theta(q):= \phi(q^n-1)/(q^n-1)$, $\mu$ is the M\"obius function
% and $\eta$ be a multiplicative character of order $t$.
and $\Gamma(t)$ stands for the set of multiplicative characters of order $t$. % is denoted by $\Gamma(t)$.
Likewise, for any $\beta\in\Fm$,
\[ %\begin{equation}\label{g-free ch}
\kappa_m(\beta)= \Theta(x^m-1) \sum_{f\mid x^m-1} \left( \frac{\mu^\prime(f)}{\Phi(f)} \sum_{\psi \in \Gamma(f)} \psi(\beta) \right),
\] %\end{equation}
where $\Theta(x^m-1):= \Phi(x^m-1)/{q^{m}}$, $\Phi$ is the analogue of the Euler $\phi$ function defined as
\[ \Phi(f) = \left|\left( \frac{\F[x]}{\langle f\rangle} \right)^*\right| , \] $\Gamma(f)$ stands for the set of additive characters of $\F$-order $f$ and $\mu^\prime$ is the analogue of the M\"obius function defined as
\[
\mu^\prime(g)=\begin{cases}
	(-1)^s , & \text{if $g$ is the product of $s$ distinct irreducible monic polynomials}, \\
	0 , &\text{otherwise.}\\ 
\end{cases}
\]

\subsection{The trace map}

Let $n$ be a divisor of $m$ and $\gamma \in \Fm$ be such that $\Tr_{{\mathbb{F}_{q^m}}/{{\mathbb{F}_{q^n}}}}(\gamma) = a \in \mathbb{F}_{q^n}$. Let $\chi$ denote the canonical additive character of $\Fm$, then all the additive characters of $\Fm$ are given by $\chi_c$, where $\chi_c(\alpha)=\chi(c\alpha)$ for any $c\in\Fm$ and $\alpha\in\Fm$. For any $\beta \in \Fm$, if $\tau_{m,d,a}$ stands for the characteristic function for elements in $\Fm$ with trace $a$ over $\mathbb{F}_{q^d}$, then
\begin{equation*}
	\tau_{m,n,a}(\beta)=\frac{1}{q^n}\sum_{c \in \mathbb{F}_{q^n}} \chi_c(\beta-\gamma)=\frac{1}{q^n}\sum_{c \in \mathbb{F}_{q^n}} \chi_c(\beta)\chi_c(\gamma)^{-1}.
\end{equation*}

The trace is transitive, that is, if $e$ divides $d$ and $d$ divides $n$, then for any $\alpha \in \Fm$ we have that $\Tr_{m/e}(\alpha)= \Tr_{d/e}(\Tr_{m/d}(\alpha))$. In particular, if $d_1<\ldots<d_k$ are divisors of $m$ and we choose $a_i \in \mathbb{F}_{q^{d_i}}$, $1\leq i \leq k$, then the existence of an element $\alpha \in \Fm$	with $\Tr_{m/d_i}(\alpha)=a_i$
is necessarily conditional on the following identities:
\begin{equation}\label{cond 2.1}
	\Tr_{d_i/\gcd(d_i,d_j)}(a_i)=\Tr_{m/\gcd(d_i,d_j)}(\alpha)=\Tr_{d_j/\gcd(d_i,d_j)}(a_j), \quad 1\leq i,j \leq k .
\end{equation}

Recently, Reis~\cite[Theorem~4.1]{reis} showed that Eq.~\eqref{cond 2.1} is also sufficient and that there exist exactly $q^{m-\lambda(\mathbf d)}$ elements in $\Fm$ with $\Tr_{m/d_i}(\alpha)=a_i$ for $1 \leq i \leq k$, where 
\begin{align*}
	\lambda(\mathbf d) & =  \deg(\lcm(x^{d_1}-1,\ldots,x^{d_k}-1))\\
	& =  d_1+\cdots+d_k+\sum_{i=2}^{k}(-1)^{i+1}\sum_{1\leq l_1<\ldots<l_i \leq k}^{} \gcd(d_{l_1},\ldots,d_{l_i}).
\end{align*}

Eq.~\eqref{cond 2.1} implies that if $d_i\mid d_j$, then %the equality 
$\Tr_{m/d_i}(\alpha)=a_i$ is already implied by $\Tr_{m/d_j}(\alpha)=a_j$. Therefore, without loss of generality, we may restrict ourselves to the divisors $d_1< \ldots < d_k$ of $n$ such that $d_i \nmid d_j$ for any $1 \leq i < j \leq k$. Next, we introduce the following, which we adopt from \cite{reis}.

\begin{definition}\label{def2.1}
	Let $m$ be an integer and $1<k<\sigma_0(m)$ , where $\sigma_0(m)$ denotes the number of positive divisors of $m$.
	\begin{enumerate}[label=(\roman*)]
		\item $\lambda_k(m)$ stands for the set of $k$-tuples $\mathbf{d}=(d_1,\ldots, d_k)$, where $d_1<\ldots<d_k<m$ are divisors of $m$ such that $d_i\nmid d_j$ for every $1\leq i < j \leq k$.
		\item For $\mathbf{d}=(d_1,\ldots,d_k) \in \lambda_k(m)$, set $\mathbb{F}_{\mathbf{d}}=\prod_{i=1}^{k} \mathbb{F}_{q^{d_i}}$ and 
		\[ \lambda(\mathbf{d}) = d_1+\cdots+d_k+\sum_{i=2}^{k}(-1)^{i+1}\sum_{1\leq l_1<\ldots<l_i \leq k}^{} \gcd(d_{l_1},\ldots,d_{l_i}). \]
	\end{enumerate}
	Moreover, for $\mathbf{d}=(d_1,\ldots,d_k) \in \lambda_k(m)$ and $\mathbf{a}=(a_1, \ldots, a_k) \in \mathbb{F}_{\textbf{d}}$, the $k$-tuple $\mathbf{a}$ is \emph{$\mathbf{d}$-admissible} if, for any $1\leq i < j\leq k$, 
	\[ \Tr_{d_i/\gcd(d_i,d_j)}(a_i)=\Tr_{d_j/\gcd(d_i,d_j)}(a_j) . \]
\end{definition}

%Next, we have the following result from Corollary 1.2 in \cite{reis} which is essential in establishing our main result.
\subsection{Some estimates}

Finally, we will need the following in establishing our main result.

\begin{lemma}[{\cite[Corollary~1.2]{reis}}]\label{l2.3}
	Let $m>1$ be an integer, $1<k<\sigma_0(m)$ and let $\mathbf{d} = (d_1, \ldots , d_k) \in \lambda_k(m)$. Then the number of $k$-tuples $(x_1, \ldots , x_k) \in \mathbb{F}_{\mathbf{d}}$ such that
	$x_1+\cdots+x_k=0$ equals \[ q^{d_1+\cdots+d_k-\lambda(\mathbf{d})}. \]
\end{lemma}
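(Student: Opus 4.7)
My plan is to count the tuples via orthogonality of additive characters and then reduce the problem to counting the elements of $\Fm$ whose trace over each intermediate field $\mathbb{F}_{q^{d_i}}$ vanishes, which is the $a_1=\cdots=a_k=0$ case of Reis's theorem (Theorem~4.1 of \cite{reis}) already recalled in the text.

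Let $\chi$ denote the canonical additive character of $\Fm$ and let $N$ be the number of tuples to be counted. Starting from the orthogonality identity $\mathbf{1}[y=0]=q^{-m}\sum_{c\in\Fm}\chi(cy)$ for $y\in\Fm$ and swapping the orders of summation, I arrive at
\[
N = \frac{1}{q^m}\sum_{c\in\Fm}\prod_{i=1}^{k}\sum_{x_i\in\mathbb{F}_{q^{d_i}}}\chi(cx_i).
\]
Next I would evaluate each inner sum. Transitivity of the trace combined with $x_i\in\mathbb{F}_{q^{d_i}}$ gives
\[ \Tr_{m/1}(cx_i)=\Tr_{d_i/1}\bigl(\Tr_{m/d_i}(cx_i)\bigr)=\Tr_{d_i/1}\bigl(x_i\,\Tr_{m/d_i}(c)\bigr), \]
so the inner sum factors through the canonical additive character of $\mathbb{F}_{q^{d_i}}$ evaluated at $x_i\,\Tr_{m/d_i}(c)$. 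By orthogonality, this equals $q^{d_i}$ when $\Tr_{m/d_i}(c)=0$ and vanishes otherwise. Writing $M=|\{c\in\Fm : \Tr_{m/d_i}(c)=0\text{ for every }1\le i\le k\}|$, I then obtain
\[ N = q^{d_1+\cdots+d_k-m}\cdot M. \]

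Finally, I would invoke Reis's theorem with the (trivially $\mathbf{d}$-admissible) choice $a_1=\cdots=a_k=0$ to conclude $M=q^{m-\lambda(\mathbf{d})}$, yielding the claimed count $N=q^{d_1+\cdots+d_k-\lambda(\mathbf{d})}$. The main delicate point is the trace-transitivity manipulation that correctly identifies when the inner character sum vanishes; the rest is standard orthogonality together with an enumeration already stated in Section~\ref{sec:prelim}.
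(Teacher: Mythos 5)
The paper does not prove this lemma at all: it is quoted verbatim as Corollary~1.2 of Reis~\cite{reis}, so there is no internal proof to compare against. Your argument is correct as a derivation: the orthogonality step, the factorization of the sum over $\mathbf{c}$, and the evaluation of the inner sums via $\Tr_{m/1}(cx_i)=\Tr_{d_i/1}\bigl(x_i\Tr_{m/d_i}(c)\bigr)$ are all sound, and the zero tuple is trivially $\mathbf{d}$-admissible, so quoting the count $M=q^{m-\lambda(\mathbf{d})}$ from \cite[Theorem~4.1]{reis} (which the paper also recalls in Section~\ref{sec:prelim}) finishes the job. One caveat worth noting: in Reis's own paper the logical order is the reverse --- Theorem~4.1 is deduced from exactly this kind of linear-equation count --- so as a from-scratch proof of the lemma your route is mildly circular relative to the source, even though it is legitimate inside the present paper, where both statements are treated as known. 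You can remove even that caveat cheaply: the only input you need is the case $a_1=\cdots=a_k=0$, i.e.\ $M=\bigl|\bigcap_{i=1}^{k}\ker\Tr_{m/d_i}\bigr|$, and this is elementary. Indeed $\Tr_{m/d_i}(c)=L_{(x^m-1)/(x^{d_i}-1)}(c)$, so $c$ lies in the intersection if and only if $\Ord_q(c)$ divides $\gcd_i\bigl((x^m-1)/(x^{d_i}-1)\bigr)=(x^m-1)/g$ with $g=\lcm(x^{d_1}-1,\ldots,x^{d_k}-1)$; since $\Fm$ is a cyclic $\F[x]$-module isomorphic to $\F[x]/\langle x^m-1\rangle$, the number of such $c$ is $q^{m-\deg g}=q^{m-\lambda(\mathbf{d})}$. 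With that substitution your proof is self-contained and arguably more direct than citing Theorem~4.1.
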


For each $n \in \mathbb{N}$, we denote by $\omega(n)$ and $W(n)$, the number prime divisors of $n$ and the number of square-free divisors of $n$ respectively. Also for $f(x) \in \F[x]$, we denote by $\omega(f)$ and $W(f)$, the number of monic irreducible $\F$-divisors of $f$ and the number of square-free $\F$-divisors of $f$ respectively. The following results provide bounds on $W(q^m-1)$ and $W(x^m-1)$, respectively.

\begin{lemma}[{\cite[Lemma~3.7]{hucz2}}]\label{l2.4}
	For any $\alpha \in \mathbb{N}$ and a positive real number $\nu$, $W(\alpha) \leq \C\cdot \alpha^{1/\nu}$, where $\C = \prod_{i=1}^{r} \frac{2}{p_i^{1/\nu}}$ and $p_1, p_2, \dots, p_r$ are the primes less than or equal to $2^\nu$ that divide $\alpha$. In particular, we will require the following values of $\C$ in the computations ahead
	\begin{enumerate}[label=(\roman*)]
		%\item $\mathcal{C}_{6}= 37.4683$
		%\item $\mathcal{C}_{8}= 4514.6265$
		\item $\mathcal{C}_{11}= 4.2445\cdot 10^{14}$
		\item $\mathcal{C}_{12}= 1.0573\cdot 10^{24}$ and
		\item $\mathcal{C}_{31}=2.4015 \cdot 10^{1553069}$.
	\end{enumerate}
\end{lemma}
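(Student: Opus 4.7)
The plan is to reduce the bound to a statement about multiplicative contributions of primes. Writing $\alpha = p_1^{e_1} \cdots p_s^{e_s}$ with distinct primes $p_i$ and $e_i \geq 1$, observe first that $W(\alpha) = 2^{\omega(\alpha)} = 2^s$, since each square-free divisor corresponds to a subset of $\{p_1, \ldots, p_s\}$. Next, since $\alpha \geq p_1 \cdots p_s$, we have $\alpha^{1/\nu} \geq \prod_{i=1}^s p_i^{1/\nu}$, and therefore
\[
\frac{W(\alpha)}{\alpha^{1/\nu}} \leq \prod_{i=1}^{s} \frac{2}{p_i^{1/\nu}}.
\]

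I would then split this product according to whether $p_i \leq 2^\nu$ or $p_i > 2^\nu$. The key observation is that for any prime $p > 2^\nu$ we have $p^{1/\nu} > 2$, so $\frac{2}{p^{1/\nu}} < 1$. Hence dropping all such factors can only increase the product, giving
\[
\prod_{i=1}^{s} \frac{2}{p_i^{1/\nu}} \leq \prod_{\substack{p_i \leq 2^\nu \\ p_i \mid \alpha}} \frac{2}{p_i^{1/\nu}} \leq \prod_{i=1}^{r} \frac{2}{p_i^{1/\nu}} = \mathcal{C}_\nu,
\]
where in the final step I bound by the product over \emph{all} primes $p \leq 2^\nu$ dividing $\alpha$, as stated in the lemma.

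As for the numerical values of $\mathcal{C}_{11}$, $\mathcal{C}_{12}$ and $\mathcal{C}_{31}$, these are simply direct evaluations of the formula: one lists the primes $p \leq 2^{11}$, $p \leq 2^{12}$ and $p \leq 2^{31}$ respectively (only those dividing $\alpha$ contribute, but the stated bound is uniform and uses all such primes in the worst case) and computes the indicated product. There is no genuine obstacle in the argument; the proof is essentially the single inequality above. If anything requires care, it is merely the numerical verification of the constants in items (i)--(iii), which is a straightforward but tedious calculation over the primes up to $2^{31}$ in the last case.
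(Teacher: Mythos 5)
Your argument is correct: writing $W(\alpha)=2^{s}$, using $\alpha\geq p_1\cdots p_s$, and discarding the factors $2/p^{1/\nu}<1$ coming from primes $p>2^{\nu}$ is exactly the standard proof of this bound, which the paper itself does not reprove but simply cites from Cohen--Huczynska (Lemma~3.7 of \cite{hucz2}). The only remaining content is the numerical evaluation of $\mathcal{C}_{11}$, $\mathcal{C}_{12}$ and $\mathcal{C}_{31}$ as the worst-case products over \emph{all} primes up to $2^{\nu}$, which you correctly identify as a routine (if heavy, for $\nu=31$) computation.
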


\begin{lemma}[{\cite[Lemma~2.9]{lens}}]\label{l2.5}
	Let $q$ be a prime power and $m$ a positive integer. Then, we have $W(x^m-1) \leq 2^{\frac{1}{2}(m+\gcd(m,q-1))}$. In particular, $W(x^m-1) \leq 2^m$, while the equality holds if and only if $m \mid (q-1)$. Furthermore, if $m \nmid (q-1)$, $W(x^m-1) \leq  2^{3m/4}$ since in this case, $\gcd(m, q-1) \leq \frac{m}{2}$.
\end{lemma}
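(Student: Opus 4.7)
The plan is to bound $\omega(x^m-1)$, the number of distinct monic irreducible factors of $x^m-1$ over $\F$, and then use $W(x^m-1) = 2^{\omega(x^m-1)}$. The starting observation is to separate the inseparable part: writing $m = p^e m'$ where $p = \operatorname{char}(\F)$ and $\gcd(p,m') = 1$, one has $x^m - 1 = (x^{m'} - 1)^{p^e}$, so the distinct irreducible factors of $x^m-1$ coincide with those of $x^{m'}-1$. In particular $\omega(x^m-1) = \omega(x^{m'}-1)$, and since $p \nmid q-1$ we also have $\gcd(m',q-1) = \gcd(m,q-1)$.

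Next I would invoke the cyclotomic factorization $x^{m'} - 1 = \prod_{d \mid m'} \Phi_d(x)$ together with the classical fact that $\Phi_d$ splits over $\F$ into $\phi(d)/\operatorname{ord}_d(q)$ irreducible factors of degree $\operatorname{ord}_d(q)$, where $\operatorname{ord}_d(q)$ is the multiplicative order of $q$ modulo $d$. Thus
\[
\omega(x^m-1) \;=\; \sum_{d \mid m'} \frac{\phi(d)}{\operatorname{ord}_d(q)}.
\]
The key dichotomy is whether $d \mid q-1$: in that case $\operatorname{ord}_d(q) = 1$, otherwise $\operatorname{ord}_d(q) \geq 2$. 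Splitting the sum accordingly gives
\[
\omega(x^m-1) \;\leq\; \sum_{d \mid \gcd(m',q-1)} \phi(d) \;+\; \tfrac{1}{2}\!\!\sum_{\substack{d \mid m' \\ d \nmid q-1}}\!\phi(d) \;=\; \gcd(m',q-1) + \tfrac{1}{2}\bigl(m' - \gcd(m',q-1)\bigr),
\]
where I used $\sum_{d \mid n} \phi(d) = n$ twice. Simplifying and using $m' \leq m$ yields $\omega(x^m-1) \leq \tfrac{1}{2}(m + \gcd(m,q-1))$, hence the main bound.

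For the "in particular" statements, $\gcd(m,q-1) \leq m$ gives $W(x^m-1) \leq 2^m$ immediately. Equality forces both $m' = m$ (i.e.\ $p \nmid m$) and $\gcd(m,q-1) = m$, i.e.\ $m \mid q-1$; conversely, when $m \mid q-1$, the $m$ distinct $m$-th roots of unity all lie in $\F$, so $x^m - 1$ is a product of $m$ distinct linear factors and equality is attained. Finally, if $m \nmid q-1$, then $\gcd(m,q-1)$ is a proper divisor of $m$ and is therefore at most $m/2$, giving the refined bound $W(x^m-1) \leq 2^{3m/4}$.

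There is no serious obstacle in this argument; the only point requiring care is the separation of the inseparable factor $p^e$ at the start, and the verification of the equality case, which needs both $p \nmid m$ and the splitting of all $m$-th roots of unity in $\F$.
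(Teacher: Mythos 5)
Your proof is correct. The paper itself does not prove this lemma --- it is quoted directly from Lenstra and Schoof --- and your argument (reducing to the separable part $x^{m'}-1$, counting irreducible factors via $\sum_{d\mid m'}\phi(d)/\mathrm{ord}_d(q)$, and splitting the sum according to whether $d\mid q-1$) is essentially the standard counting argument behind the cited result, including the correct treatment of the equality case $m\mid q-1$.
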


The following is a direct consequence of \cite[Ineq.~(4.1)]{cohen2015consecutive}.

\begin{lemma}\label{l2.6}
	Let $W(t)$ denote the number of squarefree divisors of $t$. Then for $t\geq 3$, 
	\begin{equation*}
		W(t-1) < t^{0.96/\log\log t} .
	\end{equation*}
\end{lemma}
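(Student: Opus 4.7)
The plan is to apply the inequality from \cite{cohen2015consecutive} almost verbatim. That paper's inequality~(4.1) provides the explicit estimate $W(n) \le n^{0.96/\log\log n}$ for all integers $n \ge 3$, which in turn rests on writing $W(n) = 2^{\omega(n)}$ and invoking a Ramanujan--Wigert-type bound on $\omega$ with explicit constants, checked uniformly via a primorial-extremizer argument. Setting $n = t-1$ in that estimate yields
\[
W(t-1) \le (t-1)^{0.96/\log\log(t-1)}.
\]

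The remaining step is a monotonicity comparison showing that the right-hand side is bounded by $t^{0.96/\log\log t}$. Setting $f(x) = 0.96\,\log x / \log\log x$, a direct differentiation gives
\[
f'(x) = \frac{0.96\,(\log\log x - 1)}{x\,(\log\log x)^2},
\]
which is positive whenever $x > e^e \approx 15.15$. Hence $f(t-1) < f(t)$, and consequently $(t-1)^{0.96/\log\log(t-1)} < t^{0.96/\log\log t}$, for all $t \ge 17$.

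Finally, the finitely many exceptional cases $3 \le t \le 16$ are dispatched by direct computation: one tabulates $W(t-1)$ and compares against $t^{0.96/\log\log t}$. In this range the right-hand side is very large (the exponent $0.96/\log\log t$ blows up as $t \to e^+$), whereas $W(t-1)$ is at most $W(15) = 4$, so the strict inequality is easily verified. The main (minor) obstacle is merely bookkeeping: reconciling the precise form of inequality~(4.1) in \cite{cohen2015consecutive} with the statement above, and performing the small-case verification; no further analytic input is needed.
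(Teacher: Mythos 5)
Your proposal is correct and follows essentially the same route as the paper, which gives no argument beyond citing Ineq.~(4.1) of \cite{cohen2015consecutive}; your shift from $t-1$ to $t$ via the monotonicity of $x \mapsto \log x/\log\log x$ for $x > e^{e}$, together with the finite check for $3 \le t \le 16$, correctly supplies the routine details behind the paper's ``direct consequence'' claim.
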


\section{Intermediate Traces of Normal Elements} \label{sec:normality}

In this section we study the existence of normal elements of $\Fm$ over $\F$, with their traces over several intermediate extensions arbitrarily prescribed. Throughout this section, $m$ is relatively prime to $q$, $1<k<\sigma_0(m)$, $\mathbf{d}= (d_1, \ldots , d_k) \in \lambda_k(m)$ and $\mathbf{a}=(a_1, \ldots, a_k) \in \mathbb{F}_{\mathbf{d}}$ is a $\mathbf{d}$-admissible $k$-tuple.

\begin{lemma} \label{lemma:normal_trace}
	Suppose $\beta\in\Fm$ is normal over $\F$ and $d\mid m$. Then $\Tr_{m/d}(\beta)$ is normal over $\F$ (as an element of $\mathbb F_{q^d}$).
\end{lemma}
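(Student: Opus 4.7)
The plan is to reduce everything to a calculation of $\F$-orders using Proposition~\ref{propo:F-order}. First I would express the intermediate trace as a linearized polynomial applied to $\beta$: since
\[ \Tr_{m/d}(\beta) = \sum_{i=0}^{m/d-1} \beta^{q^{id}}, \]
this is precisely $L_f(\beta)$ where $f(x) = 1 + x^d + x^{2d} + \cdots + x^{(m/d-1)d} = (x^m-1)/(x^d-1) \in \F[x]$. (The polynomial $f$ is a genuine element of $\F[x]$ because $d \mid m$.)

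Next I would compute $\Ord_q(\Tr_{m/d}(\beta))$. By hypothesis $\beta$ is normal over $\F$, so Proposition~\ref{propo:F-order}(2) gives $\Ord_q(\beta) = x^m - 1$. Applying Proposition~\ref{propo:F-order}(4) to $L_f(\beta)$,
\[ \Ord_q(L_f(\beta)) = \frac{\Ord_q(\beta)}{\gcd(f, \Ord_q(\beta))} = \frac{x^m-1}{\gcd\!\bigl(f, x^m-1\bigr)}. \]
Since $f$ itself divides $x^m-1$ in $\F[x]$, the gcd is simply $f$, hence
\[ \Ord_q\bigl(\Tr_{m/d}(\beta)\bigr) = \frac{x^m-1}{f} = x^d - 1. \]

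Finally I would invoke Proposition~\ref{propo:F-order} once more, this time applied inside the extension $\mathbb F_{q^d}/\F$: part~(3) (with divisor $d$ of itself) together with the computation above shows that $\Tr_{m/d}(\beta) \in \mathbb F_{q^d}$, while part~(2) applied to the extension $\mathbb F_{q^d}/\F$ then says that an element of $\mathbb F_{q^d}$ whose $\F$-order equals $x^d-1$ is exactly a normal element of $\mathbb F_{q^d}$ over $\F$. This is the desired conclusion.

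There is no real obstacle here: the entire argument is a single application of the multiplicativity of $\F$-orders under linearized polynomial maps (Proposition~\ref{propo:F-order}(4)), once one has identified the linearized polynomial representing the trace. The only mild point to watch is to verify that $(x^m-1)/(x^d-1)$ is a polynomial in $\F[x]$ and divides $x^m-1$, which is immediate from $d \mid m$.
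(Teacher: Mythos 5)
Your proof is correct, and it rests on the same central observation as the paper's: the intermediate trace is the linearized polynomial $L_{(x^m-1)/(x^d-1)}$ evaluated at $\beta$, and normality is read off from the $\F$-order. The difference is in the mechanism: the paper argues by contradiction, assuming $b=\Tr_{m/d}(\beta)$ is not normal and composing $L_{\Ord_q(b)}$ with $L_{(x^m-1)/(x^d-1)}$ (via Proposition~\ref{prop:linearized}) to produce an annihilating linearized polynomial of degree less than $m$, contradicting $\Ord_q(\beta)=x^m-1$; you instead compute the $\F$-order of the image directly from Proposition~\ref{propo:F-order}(4), obtaining $\Ord_q(\Tr_{m/d}(\beta)) = (x^m-1)/\gcd\bigl((x^m-1)/(x^d-1),\,x^m-1\bigr) = x^d-1$, and then conclude membership in $\mathbb F_{q^d}$ and normality from parts (3) and (2). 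Your route is slightly stronger, since it pins down the exact $\F$-order rather than merely excluding smaller ones, and it derives the containment $\Tr_{m/d}(\beta)\in\mathbb F_{q^d}$ as a consequence; the paper's version, by relying only on the composition rule, avoids invoking part (4) of the proposition (which the paper states without proof). Both arguments are complete; the one point worth keeping explicit, as you do, is that part (2) of Proposition~\ref{propo:F-order} is applied to the extension $\mathbb F_{q^d}/\F$, i.e., normality of an element of $\mathbb F_{q^d}$ over $\F$ is equivalent to its $\F$-order being $x^d-1$.
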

\begin{proof}
	Set $\Tr_{m/d}(\beta)=b$. Then $b = L_{\frac{x^m-1}{x^d-1}}(\beta)$. Assume that $b\in\mathbb F_{q^d}$ is not normal over $\F$. Then $\deg(\Ord_q(b)) < d$. Moreover
	\[ L_{\Ord_q(b)}(b) = 0 \Rightarrow L_{\Ord_q(b)} \left( L_{\frac{x^m-1}{x^d-1}}(\beta) \right) = 0 \Rightarrow L_{\Ord_q(b)\frac{x^m-1}{x^d-1}} (\beta) = 0 . \]
	The latter contradicts the normality of $\beta$, since $\deg\left( \Ord_q(b)\frac{x^m-1}{x^d-1} \right) < m$.
\end{proof}

The above implies that we cannot arbitrarily prescribe the trace of a normal element over intermediate extensions, but instead we have to confine ourselves to values of the corresponding trace functions that are, themselves, normal over the base field. This renders the following definition essential for our setting.

\begin{definition}
	Some $\mathbf{d}$-admissible $\mathbf{a}=(a_1, \ldots, a_k) \in \mathbb{F}_{\mathbf{d}}$ is \emph{normal} if $a_i\in\mathbb F_{q^{d_i}}$ is normal over $\F$ for every $i=1,\ldots ,k$.
\end{definition}

Next, we focus on the inverse problem and obtain a correspondence, via the trace map, between the elements of $\mathbb F_{q^d}$ that are normal over $\F$ and the elements of $\Fm$ that are normal over $\F$, where $d\mid m$.
Towards this end, we continue with the following auxiliary lemma.

\begin{lemma} \label{lem:aux_polys}
	Let $f,g\in\F[x]$ be polynomials such that $f\mid g$. The map
	\[ \xi : \left( \frac{\F[x]}{\langle g \rangle}\right)^* \to \left( \frac{\F[x]}{\langle f \rangle}\right)^* , \ h\pmod{g} \mapsto h\pmod{f} \]
	is a group epimorphism.
\end{lemma}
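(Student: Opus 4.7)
The plan is to verify in turn that $\xi$ is well-defined, is a group homomorphism, and is surjective; only the last step requires any real work.

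For well-definedness and the homomorphism property, I would observe that $f \mid g$ means $\langle g\rangle \subseteq \langle f \rangle$, so the natural projection $\F[x] \to \F[x]/\langle f\rangle$ factors through $\F[x]/\langle g\rangle$, giving a well-defined ring homomorphism whose formula agrees with $\xi$ on representatives. To see that this ring homomorphism restricts to a map between the unit groups, note that if $\gcd(h,g)=1$ then any common irreducible factor of $h$ and $f$ would also divide $g$, hence $\gcd(h,f)=1$ as well. Being the restriction of a ring homomorphism to units, $\xi$ is automatically multiplicative.

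For surjectivity, I would use the Chinese Remainder Theorem. Write $g = p_1^{e_1}\cdots p_r^{e_r}$ for distinct monic irreducibles $p_i \in \F[x]$, and correspondingly $f = p_1^{e_1'}\cdots p_r^{e_r'}$ with $0 \leq e_i' \leq e_i$. Given $[h] \in (\F[x]/\langle f\rangle)^*$ represented by $h \in \F[x]$ with $\gcd(h,f)=1$, the coprimality condition means $p_i \nmid h$ for every index $i$ with $e_i' \geq 1$. By CRT, I can choose $h' \in \F[x]$ satisfying
\[ h' \equiv h \pmod{p_i^{e_i}} \ \text{for } e_i' \geq 1, \qquad h' \equiv 1 \pmod{p_i^{e_i}} \ \text{for } e_i' = 0. \]
Then $p_i \nmid h'$ for every $i$, so $\gcd(h',g) = 1$, and reducing the first batch of congruences modulo $p_i^{e_i'}$ gives $h' \equiv h \pmod f$. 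Thus $[h']$ is a preimage of $[h]$ under $\xi$.

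The only conceptual step is surjectivity, and I do not expect it to be a genuine obstacle: once the prime factorizations of $f$ and $g$ are aligned, the CRT lift is immediate. An alternative and essentially equivalent route would be to decompose $(\F[x]/\langle g\rangle)^* \cong \prod_i (\F[x]/\langle p_i^{e_i}\rangle)^*$ via CRT and observe that each coordinate map $(\F[x]/\langle p_i^{e_i}\rangle)^* \to (\F[x]/\langle p_i^{e_i'}\rangle)^*$ is surjective (trivially when $e_i'=0$, and by lifting a coprime representative otherwise); I would use whichever formulation the authors find cleanest to cite later.
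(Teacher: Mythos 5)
Your proof is correct, but it follows a genuinely different route from the paper's. You establish surjectivity constructively: aligning the factorizations $g=\prod_i p_i^{e_i}$, $f=\prod_i p_i^{e_i'}$ and using the Chinese Remainder Theorem to lift a representative $h$ coprime to $f$ to an $h'$ coprime to $g$ with $h'\equiv h\pmod f$ (your congruence choices do guarantee both $\gcd(h',g)=1$ and the correct reduction, and the preliminary remark that $\gcd(h,g)=1$ forces $\gcd(h,f)=1$ correctly handles the restriction to unit groups). The paper instead argues by counting: it writes $g=fg'g''$ with $g'$ the largest divisor of $g$ coprime to $f$, uses $\Phi(g)=\Phi(f)\Phi(g')q^{\deg g''}$, and shows directly that $|\ker\xi|=\Phi(g')q^{\deg g''}$ by enumerating the admissible $k$ in $h=1+fk$ with $\gcd(h,g)=1$; surjectivity then follows by comparing cardinalities. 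Your argument is the more standard and arguably cleaner one—it produces an explicit preimage and avoids any computation with the function $\Phi$—while the paper's argument has the side benefit of exhibiting the kernel size $\Phi(g)/\Phi(f)$ explicitly, which is the fiber size used later in the $k$-to-one correspondence (though, once surjectivity is known, that fiber size also follows immediately from the first isomorphism theorem). Either version supports the later applications equally well.
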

\begin{proof}
	The only nontrivial part of this claim is that $\xi$ is onto. Write, $g=f g'g''$, where we take $g'\in\F[x]$ to be the largest degree divisor of $g$ that is relatively prime to $f$ and $g'' = g/(g'f)$. It follows that $\Phi(g) = \Phi(f) \Phi(g') q^{\deg g''}$ and, given that the domain and the co-domain of $\xi$ have orders $\Phi(g)$ and $\Phi(f)$, respectively, $\xi$ is onto if and only if $|\ker\xi| = \Phi(g')q^{\deg g''}$.
	
	Now, take some $h\in\F[x]$ of degree less than $\deg(g)$, such that $h+\langle g\rangle\in\ker\xi$. Then $h = 1+fk$, for some $k\in\F[x]$ of degree less than $\deg(g)-\deg(f)$, while $\gcd(h,g) = 1$. This means that, out of the $q^{\deg(g)-\deg(f)}$ choices of $k$, we are left with those such that
	\[ 1+fk \not\equiv \ell\pmod{g'} \iff k\not\equiv (\ell-1)f^{-1} \pmod{g'} , \]
	for all $\ell\in\F[x]$ of degree less than $\deg(g')$ that are not relatively prime to $g'$. In other words, we are left with $\Phi(g')$ distinct choices for $k$ modulo $g'$. By comparing degrees, we readily obtain that each such choice corresponds to $q^{\deg g''}$ choices of degree at most $\deg g-\deg f$. Hence, $|\ker\xi|=\Phi(g')q^{\deg g''}$.
\end{proof}

\begin{theorem} \label{thm:trace_correspondence}
	Let $m$ and $d$ be such that $d\mid m$. The mapping
	\[ \nu : \{\gamma\in\Fm : \gamma \text{ normal over }\F \} \to \{c\in\mathbb F_{q^d} : c \text{ normal over }\F \} , \ \gamma \mapsto \Tr_{m/d}(\gamma) \]
	is a $k$-to-one correspondence, where $k = \Phi(x^m-1)/\Phi(x^d-1)$.
\end{theorem}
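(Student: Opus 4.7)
The plan is to exploit the $\F[x]$-module structure on $\Fm$ (with $x$ acting as Frobenius $\alpha\mapsto\alpha^q$) in order to reduce the statement to the elementary Lemma~\ref{lem:aux_polys}. First, I would invoke the normal basis theorem to fix some normal $\alpha_0\in\Fm$ over $\F$. By Proposition~\ref{propo:F-order}(4), the map
\[ \F[x]/\langle x^m-1\rangle \longrightarrow \Fm,\qquad u\longmapsto L_u(\alpha_0) \]
is an $\F[x]$-module isomorphism that sends the units of $R:=\F[x]/\langle x^m-1\rangle$ bijectively onto the set of normal elements of $\Fm$ over $\F$; indeed, $L_u(\alpha_0)$ has $\F$-order $(x^m-1)/\gcd(u,x^m-1)$, which equals $x^m-1$ precisely when $u$ is a unit modulo $x^m-1$. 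By Lemma~\ref{lemma:normal_trace}, $\beta_0:=\Tr_{m/d}(\alpha_0)$ is normal in $\mathbb F_{q^d}$ over $\F$, and the identical argument produces a bijection between the units of $R':=\F[x]/\langle x^d-1\rangle$ and the normal elements of $\mathbb F_{q^d}$ over $\F$ via $v\mapsto L_v(\beta_0)$.

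Second, I would compute $\nu$ in these coordinates. Since $\Tr_{m/d}=L_h$ with $h=(x^m-1)/(x^d-1)$, Proposition~\ref{prop:linearized}(2) yields
\[ \nu(L_u(\alpha_0))=L_h(L_u(\alpha_0))=L_u(L_h(\alpha_0))=L_u(\beta_0), \]
and because $\beta_0\in\mathbb F_{q^d}$ is annihilated by $x^d-1$, the right-hand side depends only on the class of $u$ modulo $x^d-1$. Consequently, under the two identifications above, $\nu$ corresponds precisely to the canonical reduction map $\xi\colon R^*\to (R')^*$ appearing in Lemma~\ref{lem:aux_polys}.

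Finally, applying Lemma~\ref{lem:aux_polys} with $g=x^m-1$ and $f=x^d-1$ tells us that $\xi$ is a surjective group homomorphism. Surjectivity transfers at once to $\nu$, and the fibres of $\nu$, being cosets of $\ker\xi$, all have equal cardinality
\[ \frac{|R^*|}{|(R')^*|}=\frac{\Phi(x^m-1)}{\Phi(x^d-1)}=k, \]
as desired. The main subtlety I anticipate is the bookkeeping that shows the two independently chosen module identifications intertwine $\nu$ with $\xi$; choosing the base normal element in $\mathbb F_{q^d}$ to be $\beta_0=\Tr_{m/d}(\alpha_0)$—rather than an arbitrary normal element—is exactly what makes this compatibility hold, after which the conclusion is immediate.
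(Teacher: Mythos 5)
Your proof is correct and follows essentially the same route as the paper: fix a normal element of $\Fm$, identify the normal elements of $\Fm$ and of $\mathbb F_{q^d}$ with the unit groups of $\F[x]/\langle x^m-1\rangle$ and $\F[x]/\langle x^d-1\rangle$, observe that the trace becomes the reduction map, and conclude via Lemma~\ref{lem:aux_polys}. Your explicit choice of $\beta_0=\Tr_{m/d}(\alpha_0)$ as the base point in $\mathbb F_{q^d}$ makes precise the compatibility the paper leaves implicit, but the argument is the same.
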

\begin{proof}
	Lemma~\ref{lemma:normal_trace} implies that $\nu$ is well-defined. Next, fix some normal $\beta\in\Fm$. Proposition~\ref{propo:F-order} implies that every normal element $\gamma$ of $\Fm$ can be written as $\gamma = L_h(\beta)$ for some $h\in\F[x]$, that is relatively prime to $x^m-1$ and is unique modulo $x^m-1$. In other words, there is a correspondence between the normal elements of $\Fm$ and the group $(\F[x] / \langle x^m-1 \rangle)^*$. In a similar fashion the normal elements of $\mathbb F_{q^d}$ correspond to the group  $(\F[x] / \langle x^d-1 \rangle)^*$.
	The desired result follows from Lemma~\ref{lem:aux_polys} upon observing that the trace of $\gamma$ is $L_{\frac{x^m-1}{x^d-1} \cdot h} (\beta)$.
\end{proof}
In particular, we immediately get the following.
\begin{corollary}
	Let $\Fm/\F$ be a finite field extension. For every $b\in\F^*$, there exist exactly $\Phi(x^m-1)/(q-1)$ normal elements $\beta\in\Fm$, such that $\Tr(\beta)=b$.
\end{corollary}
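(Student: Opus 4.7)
The plan is to directly invoke Theorem~\ref{thm:trace_correspondence} with the choice $d=1$. First I would observe that an element $a\in\F$ is normal over $\F$ precisely when $\{a\}$ is an $\F$-basis of $\F$, that is, precisely when $a\neq 0$. Hence the codomain of the map $\nu$ in Theorem~\ref{thm:trace_correspondence} becomes exactly $\F^*$.

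Next I would compute the value of the constant $k=\Phi(x^m-1)/\Phi(x^d-1)$ at $d=1$. Since $\F[x]/\langle x-1\rangle\cong\F$ as rings, we have $\Phi(x-1)=|\F^*|=q-1$, so $k=\Phi(x^m-1)/(q-1)$.

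Combining these two observations with Theorem~\ref{thm:trace_correspondence}, the map sending a normal $\beta\in\Fm$ to $\Tr_{m/1}(\beta)$ is a $\Phi(x^m-1)/(q-1)$-to-one surjection onto $\F^*$. Therefore, for every $b\in\F^*$, the fiber $\nu^{-1}(b)$ has cardinality $\Phi(x^m-1)/(q-1)$, which is exactly the claimed count. No obstacle is anticipated here; the corollary is essentially a specialization, and the only subtlety worth mentioning explicitly is the identification of the normal elements of $\F$ over itself with $\F^*$, which guarantees that surjectivity of $\nu$ gives a nonempty fiber over each prescribed trace value $b\in\F^*$.
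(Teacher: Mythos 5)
Your proposal is correct and matches the paper's intent: the corollary is stated as an immediate specialization of Theorem~\ref{thm:trace_correspondence} with $d=1$, using exactly the two observations you make (normal elements of $\F$ over itself are the nonzero elements, and $\Phi(x-1)=q-1$). Nothing further is needed.
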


The proof of the theorem below is inspired by the ideas found in the work of Reis~\cite{reis}.

\begin{theorem} \label{thm:normality}
	Let $m$ be an integer that is not a prime power and $1 < k < \sigma_0 (m)$, where $\sigma_0 (m)$ denotes the number of positive divisors of $m$. Let $\mathbf d = (d_1 , \ldots , d_k ) \in \lambda_k (m)$ and $\mathbf a = (a_1 , \ldots , a_k ) \in \mathbb F_{\mathbf d}$ be a normal $\mathbf d$-admissible $k$-tuple. %Suppose $\gcd (d_i , d_j ) = 1$ for $1\leq i < j \leq k$ and 
	Set $g := \lcm (x^{d_1}-1 , \ldots , x^{d_k}-1)$. Then there
	exist exactly $\Phi(x^m-1)/\Phi(g)$ normal elements $\alpha\in\Fm$ with prescribed traces $\Tr_{m/d_i} (\alpha) = a_i$ for every $1 \leq i \leq k$.
\end{theorem}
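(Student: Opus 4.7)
The plan is to extend the argument behind Theorem~\ref{thm:trace_correspondence} so that it handles all the intermediate trace maps at once. Fix a normal element $\beta\in\Fm$; by Lemma~\ref{lemma:normal_trace} each $\beta_i := \Tr_{m/d_i}(\beta)$ is normal over $\F$ inside $\mathbb{F}_{q^{d_i}}$. As in the proof of Theorem~\ref{thm:trace_correspondence}, one has a bijection between the normal elements of $\Fm$ and $(\F[x]/\langle x^m-1\rangle)^*$ via $h\mapsto L_h(\beta)$, and analogous bijections $(\F[x]/\langle x^{d_i}-1\rangle)^* \leftrightarrow \{\gamma\in\mathbb F_{q^{d_i}} : \gamma \text{ normal over }\F\}$ via $h_i\mapsto L_{h_i}(\beta_i)$. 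Since Proposition~\ref{prop:linearized} together with $\F$-linearity of the trace yields $\Tr_{m/d_i}(L_h(\beta)) = L_h(\beta_i)$, under these identifications the simultaneous trace map corresponds to the natural reduction map
\[ \Psi : \left(\F[x]/\langle x^m-1\rangle\right)^* \to \prod_{i=1}^{k} \left(\F[x]/\langle x^{d_i}-1\rangle\right)^*, \quad h \mapsto (h \bmod (x^{d_i}-1))_{i=1}^k . \]

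Next, I factor $\Psi$ through $(\F[x]/\langle g\rangle)^*$. The first arrow $(\F[x]/\langle x^m-1\rangle)^*\to (\F[x]/\langle g\rangle)^*$ is surjective with fibres of constant size $\Phi(x^m-1)/\Phi(g)$ by Lemma~\ref{lem:aux_polys} (applied with $f$ there taken to be our $g$ and $g$ there taken to be $x^m-1$). The second arrow $(\F[x]/\langle g\rangle)^* \to \prod_{i=1}^{k} (\F[x]/\langle x^{d_i}-1\rangle)^*$ is injective by the Chinese Remainder Theorem for $\F[x]$, since $g = \lcm(x^{d_1}-1,\ldots,x^{d_k}-1)$ and $\gcd(x^{d_i}-1, x^{d_j}-1) = x^{\gcd(d_i,d_j)}-1$; its image consists of those tuples $(h_1,\ldots,h_k)$ satisfying the compatibility relations $h_i \equiv h_j \pmod{x^{\gcd(d_i,d_j)}-1}$ for all $i,j$. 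That this restricts to a bijection on \emph{units} is a brief check: each irreducible factor of $g$ divides some $x^{d_i}-1$, so coprimality with every $x^{d_i}-1$ is equivalent to coprimality with $g$.

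The crux is to identify these compatibility relations with the $\mathbf{d}$-admissibility of $\mathbf{a}$. Writing $a_i = L_{h_i}(\beta_i)$ and using $\beta_i = L_{(x^m-1)/(x^{d_i}-1)}(\beta)$ together with Proposition~\ref{prop:linearized}, one computes
\[ \Tr_{d_i/\gcd(d_i,d_j)}(a_i) - \Tr_{d_j/\gcd(d_i,d_j)}(a_j) = L_{(h_i-h_j)\cdot (x^m-1)/(x^{\gcd(d_i,d_j)}-1)}(\beta). \]
Since $\beta$ is normal, Proposition~\ref{propo:F-order} gives $\Ord_q(\beta) = x^m-1$, so the right-hand side vanishes exactly when $x^{\gcd(d_i,d_j)}-1$ divides $h_i-h_j$. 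Thus $\mathbf{d}$-admissibility of $\mathbf{a}$ is equivalent to compatibility of the associated tuple $(h_1,\ldots,h_k)$, and the normality of each $a_i$ is equivalent to each $h_i$ being a unit modulo $x^{d_i}-1$.

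Putting the pieces together, the image of $\Psi$ is precisely the set of normal $\mathbf{d}$-admissible tuples, and each such tuple has exactly $\Phi(x^m-1)/\Phi(g)$ preimages under $\Psi$; these preimages are exactly the normal $\alpha\in\Fm$ with prescribed traces $\Tr_{m/d_i}(\alpha) = a_i$. The main obstacle I anticipate is the computation in the third paragraph: intermediate traces of nested linearized expressions must be tracked carefully, and one must convert the vanishing condition into polynomial divisibility by invoking $\Ord_q(\beta) = x^m-1$. Once this translation is in place, the count follows from Lemma~\ref{lem:aux_polys} together with the polynomial Chinese Remainder Theorem.
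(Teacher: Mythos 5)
Your proof is correct and follows essentially the same route as the paper: fix a normal $\beta$, parametrize normal elements as $L_F(\beta)$ with $F$ a unit modulo $x^m-1$, translate the trace conditions into the congruence system $F\equiv h_i \pmod{x^{d_i}-1}$ with a unique (unit) solution modulo $g$, and count the lifts via Lemma~\ref{lem:aux_polys}. The only presentational difference is that you spell out explicitly that $\mathbf d$-admissibility of $\mathbf a$ is equivalent to the pairwise compatibility $h_i\equiv h_j \pmod{x^{\gcd(d_i,d_j)}-1}$ and invoke the non-coprime Chinese Remainder Theorem in $\F[x]$, a step the paper delegates to the arguments of Reis's Theorem~4.1.
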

\begin{proof}
	Fix some $\gamma\in\Fm$ normal over $\F$. For each $i=1,\ldots ,k$, set $c_i = \Tr_{m/d_i}(\gamma)$. From Lemma~\ref{lemma:normal_trace}, $c_i\in\mathbb{F}_{d_i}$ is normal, thus, there exists some $h_i$ (unique modulo $x^{d_i}-1$), relatively prime to $x^{d_i}-1$, such that $a_i = L_{h_i}(c_i)$. Furthermore, some $\alpha\in\Fm$ is normal if and only if $\alpha = L_F(\gamma)$, for some $F\in\F[x]$, that is relatively prime to $x^m-1$.
	
	It follows that, $\Tr_{m/d_i}(\alpha) = a_i$ if and only if $F\equiv h_i\pmod{x^{d_i}-1}$ for every $i=1,\ldots ,k$. Following the arguments from the proof of \cite[Theorem~4.1]{reis}, this congruence system has a unique solution modulo $g$, which we denote by $f$. Moreover, given that $\gcd(h_i,x^{d_i}-1)=1$ for all $i=1,\ldots ,k$, we readily obtain that $f+\langle g\rangle\in (\F[x]/\langle g\rangle)^*$. The desired result follows from the fact that Lemma~\ref{lem:aux_polys} entails that we have exactly $\Phi(x^m-1)/\Phi(g)$ choices for $F\in\F[x]$, that will be distinct modulo $x^m-1$ and relatively prime to $x^m-1$, such that $F\equiv f\pmod{x^m-1}$.
\end{proof}

\section{Intermediate traces of primitive normal elements} \label{sec:main_result}
Throughout this section, we adopt the same assumptions and notation as in Section~\ref{sec:normality}, with the additional assumption that $\mathbf a$ is normal.	Let $\N_{m,\mathbf{d},\mathbf{a}}$ %(q^m-1, x^m-1)$
be the number of primitive normal elements $\alpha \in \Fm$ with $\Tr_{m/d_i}(\alpha)=a_i$ for $i=1,\ldots ,k$. In particular,
\[ \N_{m,\mathbf{d},\mathbf{a}}%(q^m-1, x^m-1)
= \sum_{w \in \Fm}\rho_m(w)\cdot \kappa_m(w) \prod_{i=1}^{k}\tau_{m,d_i,a_i}(w) . \]

%Abbreviate $\N_{m,\mathbf{d},\mathbf{a}}(q^m-1, x^m-1)$ to $\N_{m,\mathbf{d},\mathbf{a}}$.
Since the $k$-tuple $(a_1,\ldots, a_k)$ is $\mathbf{d}$-admissible, we have seen that there exists some $\beta \in \Fm$ such that $\Tr_{(m/t)/{d_i}}(\beta) = a_i$ for $1\leq i \leq k$. Write $D = d_1 + \cdots + d_k$ and, for a generic $\mathbf{c} =
(c_1, \ldots, c_k) \in \mathbb{F}_{\mathbf{d}}$, write $s(\mathbf c) = \sum_{i=1}^{k}	c_i$. Now using the characteristic functions from Section~\ref{sec:prelim}, we get that
\begin{align*}
	\frac{q^D\cdot \N_{m,\mathbf{d},\mathbf{a}}}{\theta(q)\Theta(x^m-1)}= & \sum_{w \in \Fm}\sum_{\substack{t\mid q^m-1\\f\mid x^m-1}} \frac{\mu(t)\mu'(f)}{\phi(t)\Phi(f)}\sum_{\substack{\eta \in \Gamma(t)\\ \psi \in \Gamma(f)}} \eta(w)\psi(w)\cdot \prod_{i=1}^{k}\left(\sum_{c_i \in \mathbb{F}_{q^{d_i}}}\chi_{c_i}(w)\chi_{c_i}(\beta)^{-1}\right)\\
	= & \sum_{w \in \Fm}\sum_{\mathbf{c} \in \mathbb{F}_{\mathbf{d}}}\sum_{\substack{t\mid q^m-1\\f\mid x^m-1}} \frac{\mu(t)\mu'(f)}{\phi(t)\Phi(f)}\sum_{\substack{\eta \in \Gamma(t)\\ \psi \in \Gamma(f)}} \eta(w)\psi(w)\chi_{s(\mathbf{c})}(w)\chi_{s(\mathbf{c})}(-\beta)\\
	= & \sum_{w \in \Fm}\sum_{\mathbf{c} \in \mathbb{F}_{\mathbf{d}}}\sum_{\substack{t\mid q^m-1\\f\mid x^m-1}} \frac{\mu(t)\mu'(f)}{\phi(t)\Phi(f)}\sum_{\substack{\eta \in \Gamma(t)\\ \psi \in \Gamma(f)}} \eta(w)\chi_u(w)\chi_{s(\mathbf{c})}(w)\chi_{s(\mathbf{c})}(-\beta)\\
	= & \sum_{w \in \Fm}\sum_{\mathbf{c} \in \mathbb{F}_{\mathbf{d}}}\sum_{\substack{t\mid q^m-1\\f\mid x^m-1}} \frac{\mu(t)\mu'(f)}{\phi(t)\Phi(f)}\sum_{\substack{\eta \in \Gamma(t)\\ \psi \in \Gamma(f)}} \eta(w)\chi_{u+s(\mathbf{c})}(w)\chi_{s(\mathbf{c})}(-\beta)\\
	= & \sum_{\substack{t\mid q^m-1\\f\mid x^m-1}} \frac{\mu(t)\mu'(f)}{\phi(t)\Phi(f)}\sum_{\substack{\eta \in \Gamma(t)\\ \psi \in \Gamma(f)}}\sum_{\mathbf{c} \in \mathbb{F}_{\mathbf{d}}}\chi_{s(\mathbf{c})}(-\beta)G_m(\eta,\chi_{u+s(\mathbf{c})}),\\
\end{align*}
where $G_m(\eta, \chi_{u+s(\mathbf{c})})=\sum_{w \in \Fm} \eta(w)\cdot\chi_{u+s(\mathbf{c})}(w)$ denotes a Gauss sum.
%
% 	\[
% 	\begin{cases}
	% 		q^m , & \text{if $\eta \in \Gamma(1)$ and $u+s(\mathbf{c})=0$}, \\
	% 	    0 , & \text{if $\eta \in \Gamma(1)$ and $u+s(\mathbf{c})\neq0$},\\ 
	% 		0 , & \text{if $\eta \notin \Gamma(1)$ and $u+s(\mathbf{c})=0$},\\
	% 	\end{cases}
% 	\]
% %
%    and in the remaining cases we have the well-known identity $|G_m(\eta, \chi_{u+s(\mathbf{c})})|=q^{m/2}$.
In particular, we may rewrite
\[ \frac{q^D\cdot \N_{m,\mathbf{d},\mathbf{a}}}{\theta(q)\Theta(x^m-1)}= S_1+S_2, \]
where the term $S_1$ is the part of the above sum for $\eta \in \Gamma(1)$ %, $\psi \in \Gamma(1)$    and $u+s(\mathbf{c})=0$ 
and $S_2$ is the part for $\eta \notin \Gamma(1)$. %, $\psi \notin \Gamma(1)$ and $u+s(\mathbf{c})\neq 0$. 
Then $\theta(q)\Theta(x^m-1)S_1$ will denote the number of normal elements with their traces over $\mathbb F_{q^{d_i}}$ prescribed to $a_i$. Then, Theorem~\ref{thm:normality} yields
\[ S_1 = \frac{\Phi(x^m-1)}{\Phi(g)\theta(q)\Theta(x^m-1)} = \frac{q^m}{\Phi(g)\theta(q)} , \]
where $g=\lcm(x^{d_1}-1 ,\ldots ,x^{d_k}-1)$. Clearly, $\theta(q) \leq 1$ and $\Phi(g) < q^{\deg(g)} = q^{\lambda(\mathbf d)}$, hence,
\[ %\begin{equation} \label{eq:S1}
S_1 > q^{m-\lambda(\mathbf d)} .
\] %\end{equation}

Regarding $S_2$, we have that
\[ S_2= \sum_{\mathbf{c} \in \mathbb{F}_d}\sum_{\substack{t\mid q^m-1,t \neq 1\\f\mid x^m-1}} \frac{\mu(t)\mu'(f)}{\phi(t)\Phi(f)}\sum_{\substack{\eta \in \Gamma(t)\\ \chi_u \in \Gamma(f)}}\chi_{s(\mathbf{c})}(-\beta)G_m(\eta,\chi_{u+s(\mathbf{c})}) . \]
Recall that, for $\eta\notin \Gamma(1)$, the orthogonality relations and the well-known identity on Gauss sums yield that
\begin{enumerate}
	\item $G_m(\eta, \chi_{u+s(\mathbf{c})})= 0$, if $u+s(\mathbf{c})=0$, and
	\item $|G_m(\eta, \chi_{u+s(\mathbf{c})})|=q^{m/2}$, otherwise.
\end{enumerate}
Hence,
given that $|\chi_{s(\mathbf{c})}(-\beta)|=1$, %$|G_m(\eta, \chi_{u+s(\mathbf{c})})|=q^{m/2}$ in every term of the sum $S_2$, unless $u+s(\mathbf c)=0$. In particular, given 
that $|\Gamma(t)| = \phi(t)$, for all $t\mid q^m-1$, and that $|\Gamma(f)| = \Phi(f)$ for all $f\mid x^m-1$, we obtain
\[ |S_2| \leq %q^{m/2}\sum_{\substack{\mathbf{c} \in \mathbb{F}_d\\u+s(\mathbf{c}) \neq 0}}\sum_{\substack{t\mid q^m-1,t \neq 1\\f\mid x^m-1, f \neq 1}} \frac{\mu(t)\mu'(f)}{\phi(t)\Phi(f)}  = 
q^{m/2+D}\cdot W(q^m-1) \cdot W(x^m-1). \]

Putting all of the above together,
\[ \frac{q^D\cdot \N_{m,\mathbf{d},\mathbf{a}}}{\theta(q)\Theta(q)} > q^{m-\lambda({\mathbf{d}})} - q^{m/2+D}\cdot W(q^m-1) \cdot W(x^m-1) . \]
Thus, $\N_{m,\mathbf{d},\mathbf{a}}>0$, provided that 
\[ q^{m/2-\lambda(\mathbf{d})-D} \geq W(q^m-1)\cdot W(x^m-1) . \]
Summarizing the above discussion, we have the following theorem.

\begin{theorem}\label{main}
	Let $m$ be an integer and $1<k<\sigma_0(m)$, where $\sigma_0(m)$ denotes the number of positive divisors of $m$. Let $\mathbf{d}= (d_1, \ldots , d_k) \in \lambda_k(m)$ and $\mathbf{a}=(a_1, \ldots, a_k) \in \mathbb{F}_{\mathbf{d}}$ be normal $\mathbf{d}$-admissible. Then there exists a primitive normal element $\alpha \in \Fm$ with prescribed traces $\Tr_{n/{d_i}}(\alpha)=a_i$ for every $1 \leq i \leq k$, provided that 
	\begin{equation}\label{3.1}
		q^{m/2-\lambda(\mathbf{d})-D} \geq W(q^m-1)\cdot W(x^m-1).
	\end{equation}
\end{theorem}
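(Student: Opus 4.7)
The natural approach is the standard character-sum method for existence results of this type: express $\N_{m,\mathbf{d},\mathbf{a}}$ as a weighted sum of characters using the characteristic functions $\rho_m$, $\kappa_m$, and $\tau_{m,d_i,a_i}$ introduced in Section~\ref{sec:prelim}, and then split the resulting expression into a dominant ``main'' term plus a bounded ``error'' term. Concretely, I would start by writing
\[ \N_{m,\mathbf{d},\mathbf{a}} = \sum_{w\in\Fm} \rho_m(w)\kappa_m(w)\prod_{i=1}^k \tau_{m,d_i,a_i}(w), \]
and substituting the three character expansions. Since $\mathbf{a}$ is $\mathbf{d}$-admissible, by Reis there is at least one $\beta\in\Fm$ realizing the prescribed traces, so each $\tau_{m,d_i,a_i}(w)$ can be written as an average of $\chi_{c_i}(w)\chi_{c_i}(\beta)^{-1}$ with $c_i\in\mathbb F_{q^{d_i}}$. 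After interchanging summations, the product collapses into a single additive character $\chi_{s(\mathbf c)}$ evaluated on $w$ and on $-\beta$, turning the inner $w$-sum into a Gauss sum $G_m(\eta,\chi_{u+s(\mathbf c)})$.

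Next I would split the expression as $S_1+S_2$ according to whether the multiplicative character $\eta$ is trivial or nontrivial. The key observation for the main term is that $S_1$ (after restoring the constants $\theta(q)\Theta(x^m-1)$) is precisely the count of \emph{normal} elements with the prescribed intermediate traces $a_i$, since the primitivity condition has been relaxed. Applying Theorem~\ref{thm:normality} gives an exact value $\Phi(x^m-1)/\Phi(g)$ for this count, and a crude lower bound $\Phi(g)<q^{\lambda(\mathbf d)}$ together with $\theta(q)\le 1$ yields $S_1 > q^{m-\lambda(\mathbf d)}$ in the normalization appearing in the preamble.

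For the error term $S_2$, I would use the standard facts that $G_m(\eta,\chi_{u+s(\mathbf c)})$ vanishes when $u+s(\mathbf c)=0$ (a case that can be safely absorbed into a worst-case estimate) and otherwise has modulus exactly $q^{m/2}$, while $|\chi_{s(\mathbf c)}(-\beta)|=1$. Counting the outer characters via $|\Gamma(t)|=\phi(t)$ and $|\Gamma(f)|=\Phi(f)$, the Möbius weights telescope and leave the square-free divisor counts $W(q^m-1)$ and $W(x^m-1)$, together with a factor $q^D$ from the $\mathbf c$-sum, yielding $|S_2|\le q^{m/2+D}\,W(q^m-1)\,W(x^m-1)$. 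Combining the two bounds and dividing by $q^D$ produces the stated sufficient condition~\eqref{3.1}.

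The only genuinely delicate step is the identification of $S_1$ with the normal-element count, which requires verifying that the restriction $\eta\in\Gamma(1)$ in the character expansion of $\rho_m$ exactly removes the primitivity condition while preserving the normality condition and the trace prescriptions; once this is set up correctly, Theorem~\ref{thm:normality} does the rest. Everything else is routine bookkeeping with Gauss sum bounds and the orthogonality relations.
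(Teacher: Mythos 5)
Your proposal is correct and follows essentially the same route as the paper: the character-sum expansion of $\N_{m,\mathbf{d},\mathbf{a}}$, the split into $S_1$ (trivial multiplicative character, identified via Theorem~\ref{thm:normality} with the normal-element count and bounded below by $q^{m-\lambda(\mathbf d)}$) and $S_2$ (bounded by $q^{m/2+D}\,W(q^m-1)\,W(x^m-1)$ using the Gauss sum estimate), leading directly to condition~\eqref{3.1}. No substantive differences from the paper's argument.
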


Furthermore, we have the following result which is an immediate consequence of \cite[Theorem~4.1]{reis} and the main theorem in \cite{cohen1999}. The idea of the proof is similar to that of \cite[Theorem~2.5]{ribas} and hence omitted.

\begin{theorem}\label{lcmcond}
	Keeping the notations as in Theorem~\ref{main}, we have that there exists a primitive normal element $\alpha \in \Fm$ with prescribed traces $\Tr_{n/{d_i}}(\alpha)=a_i$ for every $1 \leq i \leq k$ if $\lcm(d_1, \dots, d_k) < m$ holds.
\end{theorem}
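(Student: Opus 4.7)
Set $L := \lcm(d_1, \dots, d_k)$, so $L \mid m$ and, by the hypothesis of the theorem, $m/L \geq 2$ (since $L$ is a proper divisor of $m$). The plan is to reduce, via transitivity of the trace, to the single condition $\Tr_{m/L}(\alpha) = b$ for a carefully chosen $b \in \mathbb{F}_{q^L}$, and then to combine Theorem~\ref{thm:normality} with the main theorem of \cite{cohen1999} and the classical transitivity of normal bases.

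First, by transitivity $\Tr_{m/d_i}(\alpha) = \Tr_{L/d_i}(\Tr_{m/L}(\alpha))$, so prescribing $\Tr_{m/d_i}(\alpha)=a_i$ for all $i$ is equivalent to prescribing $\Tr_{m/L}(\alpha) = b$ for some $b \in \mathbb{F}_{q^L}$ that itself satisfies $\Tr_{L/d_i}(b)=a_i$ for all $i$. To produce such a $b$ that is moreover normal over $\F$, I would apply Theorem~\ref{thm:normality} with $m$ replaced by $L$. Its hypotheses are satisfied: because $d_i \nmid d_j$ and $k \geq 2$, a short check shows $d_1 \geq 2$, $d_k < L$, $L$ is not a prime power, and $\sigma_0(L) > k$, whence $(d_1,\dots,d_k) \in \lambda_k(L)$; the normal $\mathbf d$-admissibility of $\mathbf a$ is by assumption. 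Theorem~\ref{thm:normality} then produces $\Phi(x^L-1)/\Phi(g) > 0$ elements $b \in \mathbb{F}_{q^L}$ that are normal over $\F$ and meet all the intermediate trace conditions, where $g = \lcm(x^{d_1}-1,\dots,x^{d_k}-1)$. I fix any such $b$; in particular $b \neq 0$.

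Next, I view $\Fm$ as an extension of $\mathbb{F}_{q^L}$ of degree $m/L \geq 2$. The main theorem of \cite{cohen1999}, which is exception-free, yields a primitive element $\alpha \in \Fm$ that is normal over $\mathbb{F}_{q^L}$ and satisfies $\Tr_{m/L}(\alpha) = b$. To upgrade normality to the base field $\F$, I would invoke the classical transitivity of normal bases (see, e.g., Hachenberger's monograph on normal bases): for a tower $\F \subseteq \mathbb{F}_{q^L} \subseteq \Fm$, an element $\alpha \in \Fm$ is normal over $\F$ whenever $\alpha$ is normal over $\mathbb{F}_{q^L}$ and $\Tr_{m/L}(\alpha) \in \mathbb{F}_{q^L}$ is normal over $\F$. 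Both hypotheses hold by construction, so $\alpha$ is normal over $\F$. Primitivity being an absolute property, $\alpha$ is the desired element, and by construction $\Tr_{m/d_i}(\alpha) = \Tr_{L/d_i}(b) = a_i$.

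The critical step is the final upgrade of normality from $\mathbb{F}_{q^L}$ to $\F$: applying \cite{cohen1999} over $\mathbb{F}_{q^L}$ alone gives only the weaker property of being normal over $\mathbb{F}_{q^L}$. It is precisely the normality hypothesis on $\mathbf a$---which is absent in the primitive-only analogue \cite[Theorem~2.5]{ribas}---that allows $b$ itself to be chosen normal over $\F$, enabling the transitivity of normal bases to close the gap. This is perfectly consistent with Lemma~\ref{lemma:normal_trace}, which shows the normality of $\mathbf a$ is in any case a necessary condition.
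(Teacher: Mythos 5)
Your reduction to a single trace condition over $\mathbb{F}_{q^L}$ with $L=\lcm(d_1,\dots,d_k)$, the verification that Theorem~\ref{thm:normality} applies with $L$ in place of $m$ (producing a trace value $b\in\mathbb{F}_{q^L}$ that is normal over $\F$ and compatible with $\mathbf a$), and the application of \cite{cohen1999} to the extension $\Fm/\mathbb{F}_{q^L}$ all follow the route the paper sketches. The proof breaks exactly at the step you single out as critical: the ``transitivity of normal bases'' you invoke is not a theorem. The classical statement goes only one way (normality over $\F$ implies normality over every intermediate field and, by Lemma~\ref{lemma:normal_trace}, normality of the trace); the converse you use --- $\alpha$ normal over $\mathbb{F}_{q^L}$ and $\Tr_{m/L}(\alpha)$ normal over $\F$ imply $\alpha$ normal over $\F$ --- is false. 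For a concrete counterexample take $q\equiv 1\pmod 4$, $m=4$, $L=2$: then $x^4-1$ splits over $\F$, the Frobenius on $\mathbb{F}_{q^4}$ is diagonalizable with one-dimensional eigenspaces $V_1,V_{-1},V_i,V_{-i}$, and $\mathbb{F}_{q^2}=V_1\oplus V_{-1}$. For $\alpha=v_1+v_{-1}+v_i$ with nonzero $v_c\in V_c$ one checks $\alpha^{q^2}\neq\pm\alpha$, so $\alpha$ is normal over $\mathbb{F}_{q^2}$, and $\Tr_{4/2}(\alpha)=2v_1+2v_{-1}$ is normal over $\F$, yet $\alpha$ is not normal over $\F$, since its $V_{-i}$-component vanishes.

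This failure is not an artefact of the small example and persists in configurations actually covered by the theorem. For $q=7$, $\mathbf d=(2,3)$, $m=12$, $L=6$: applying Theorem~\ref{thm:trace_correspondence} with base field $\mathbb{F}_{7^6}$ and $d=1$, the number of elements of $\mathbb{F}_{7^{12}}$ that are normal over $\mathbb{F}_{7^6}$ and whose $\mathbb{F}_{7^6}$-trace is normal over $\mathbb{F}_7$ equals $(7^6-1)\,\Phi(x^6-1)$, whereas the number of elements normal over $\mathbb{F}_7$ is $\Phi(x^{12}-1)=(7^2-1)^3\,\Phi(x^6-1)$, and $(7^2-1)^3=110592<117648=7^6-1$; so the first set strictly contains the second, and your implication cannot hold pointwise there either. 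Consequently, the element furnished by \cite{cohen1999} over $\mathbb{F}_{q^L}$ is guaranteed to be primitive, normal over $\mathbb{F}_{q^L}$, and to have the right traces, but nothing in your argument forces it to be normal over $\F$, which is what the theorem asserts. Closing the gap needs a statement prescribing the $\mathbb{F}_{q^L}$-trace of a primitive element that is normal over $\F$ itself; in \cite{cohen1999} the trace is prescribed over the same field over which normality is guaranteed, and the issue is invisible in the primitive-only analogue \cite[Theorem~2.5]{ribas} only because primitivity, unlike normality, is an absolute property. The normality of $\mathbf a$ is indeed necessary, but it does not by itself upgrade normality from $\mathbb{F}_{q^L}$ to $\F$.
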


\section{Existence results}\label{sec:existence_results}

In this section we explore the existence of primitive normal elements with prescribed traces in intermediate extensions and present explicit existence results. Although it is desirable to study the problem without any restrictions, due to the complexity of the expression of $\lambda(\mathbf{d})$ we restrict our study to the condition 
\[ \gcd(d_i, d_j)=1 ~\text{for} ~1 \leq i < j \leq k. \]
In particular, for $k \geq 2$, when $\gcd(d_i, d_j)=1$ for $1 \leq i < j \leq k$ we have that 
\[ %\begin{equation}\label{cond5.1}
\lambda(\mathbf{d}) = d_1 + \dots + d_k - k +1.
\] %\end{equation}

Next, let $p_i$ be the $i$-th prime. We have that $p_i \leq d_i$ and thus
\begin{equation}\label{cond5.2}
	p_t\leq d_t \leq \left( \frac{m}{p_1\cdots p_{t-1}}\right)^{1/(k+1-t)} ,
\end{equation}
for $1\leq t\leq k$, where the empty product equals 1.

Furthermore we may assume $\lcm(d_1, \dots, d_k) = m$, since otherwise by Theorem~\ref{lcmcond} we have the desired element. Also, since $\lcm(d_1, \dots, d_k) = m$ and $\gcd(d_i, d_j)=1$ for $1 \leq i < j \leq k$, we get that $d_1\cdots d_k=m$.

Under the above restrictions we obtain the following concrete and asymptotic results.

\begin{theorem}\label{th5.1}
	Let $m$ be an integer and $1<k<\sigma_0(m)$, where $\sigma_0(m)$ denotes the number of positive divisors of $m$. Let $\mathbf{d}= (d_1, \ldots , d_k) \in \lambda_k(m)$ and $\mathbf{a}=(a_1, \ldots, a_k) \in \mathbb{F}_{\mathbf{d}}$ be $\mathbf{d}$-admissible. Suppose $\gcd(d_i, d_j)=1$ for $1 \leq i < j \leq k$. Then there exists a primitive normal element $\alpha \in \Fm$ with prescribed traces $\Tr_{n/{d_i}}(\alpha)=a_i$ for every $1 \leq i \leq k$ provided that:
	
	\begin{enumerate}[label=(\roman*)]
		\item $k \geq 4$:
		\begin{enumerate}
			\item $k=4$ and $q \geq 1334$,
			\item $k=5$ and $q \geq 9$,
			\item $k=6$ and $q \geq 7$,
			\item $k=7$ and $q \geq 5$.
		\end{enumerate}
		\item $k = 3$, $m\geq 60$ and $q \geq 2.2660 \cdot 10^{24072855}$.
		\item $k=2$ and 
		\begin{enumerate}
			\item $d_1 \geq 8$ and $q$ large enough,
			\item $d_1 = 7$ and $q$ large enough with $(d_1, d_2) \neq (7, 8)$,
			\item $d_1 = 6$, $d_2\geq 13$ and $q$ large enough.
		\end{enumerate} 
	\end{enumerate}
\end{theorem}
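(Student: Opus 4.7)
The plan is to apply the sufficient condition \eqref{3.1} of Theorem~\ref{main} and exploit the drastic simplifications provided by the pairwise coprimality assumption. Under this assumption, inclusion--exclusion collapses $\lambda(\mathbf{d})$ to $d_1+\cdots+d_k-(k-1)$, and we may further assume $d_1\cdots d_k=m$, for otherwise $\lcm(d_1,\dots,d_k)<m$ and Theorem~\ref{lcmcond} already produces the desired element. Setting $D=d_1+\cdots+d_k$, the condition \eqref{3.1} becomes
\[ q^{m/2-2D+k-1}\;\ge\;W(q^m-1)\cdot W(x^m-1), \]
so the whole proof reduces to showing that the left-hand exponent is positive and sufficiently large relative to the $W$-bounds, case by case in $k$.

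To bound the right-hand side I would apply Lemma~\ref{l2.4} to get $W(q^m-1)\le \mathcal C_\nu q^{m/\nu}$ with $\nu$ tuned to each case (the tabulated values $\mathcal C_{11}$, $\mathcal C_{12}$, $\mathcal C_{31}$ are precisely those I expect to need), Lemma~\ref{l2.5} to handle $W(x^m-1)\le 2^{m}$ (or $2^{3m/4}$ when $m\nmid q-1$), and Lemma~\ref{l2.6} for the asymptotic bound on $W(q^m-1)$ when $m$ is fixed and $q\to\infty$. To bound $D$ from above I would invoke \eqref{cond5.2}, which pins down each $d_t$ in terms of $m$ and the first $t-1$ primes; the decisive observation is that for large $k$ these bounds force $D$ to be very small compared to $m/2$, whereas for small $k$ the margin in \eqref{3.1} becomes uncomfortably tight.

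For part~(i), $k\ge 4$, inequality~\eqref{cond5.2} gives so much room that choosing $\nu=11$ or $12$ in Lemma~\ref{l2.4} suffices, and a routine case split on $k\in\{4,5,6,7\}$ and on the shape of $\mathbf d$ produces the thresholds $q\ge 1334,9,7,5$; for $k\ge 8$ the inequality holds unconditionally. For part~(ii), $k=3$, the worst case is $(d_1,d_2,d_3)=(2,3,m/6)$ in which $D$ is already linear in $m$; one is then forced to take $\nu=31$, producing the enormous constant $\mathcal C_{31}$. Combined with the running assumption $m\ge 60$, a finite case analysis over the small values of $(d_1,d_2)$ delivers the threshold $q\ge 2.2660\cdot 10^{24072855}$.

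For part~(iii), $k=2$, write $D=d_1+m/d_1$; the exponent of $q$ in \eqref{3.1} is
\[ \tfrac{m}{2}\!\left(1-\tfrac{4}{d_1}\right)-2d_1+1, \]
a linear function of $m$ whose leading coefficient is positive only when $d_1\ge 5$. Inspecting its sign over the allowed coprime pairs $(d_1,d_2)$ shows that it fails to be positive exactly at $(d_1,d_2)=(7,8)$ and at $d_1=6$ with $d_2\in\{7,11\}$; these are precisely the configurations excluded from the theorem. Outside these exceptions the exponent is a positive linear function of $m$, so combining Lemmas~\ref{l2.5} and \ref{l2.6} yields existence whenever $q$ is large enough, producing subcases (a), (b), and (c). The main obstacle is indeed this last case: pairwise coprimality leaves the sieve with almost no margin when $k=2$, the thresholds depend delicately on $(d_1,d_2)$, and the exceptional pairs must be singled out by a direct sign analysis of the exponent above, rather than by any uniform estimate.
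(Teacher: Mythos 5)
Your proposal follows essentially the same route as the paper: reduce to $\lcm(d_1,\dots,d_k)=m$ (hence $d_1\cdots d_k=m$) via Theorem~\ref{lcmcond}, use $\lambda(\mathbf d)=D-k+1$ under pairwise coprimality so that Ineq.~\eqref{3.1} becomes $q^{m/2-2D+k-1}\ge W(q^m-1)\cdot W(x^m-1)$, bound $D$ via \eqref{cond5.2} (the paper uses $d_i\le\lfloor m/k!\rfloor$ when $k\ge4$), and close each case with Lemmas~\ref{l2.4}--\ref{l2.6} and the constants $\mathcal{C}_{11},\mathcal{C}_{12},\mathcal{C}_{31}$; in particular your sign analysis of $\tfrac m2\bigl(1-\tfrac4{d_1}\bigr)-2d_1+1$ for $k=2$ isolates exactly the exceptional pairs $(6,7)$, $(6,11)$, $(7,8)$ that the paper excludes. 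The only caveats are that the numerical verifications yielding the explicit thresholds (e.g.\ $q\ge1334$ for $k=4$, the $m=60$, $\nu=31$ computation giving $2.2660\cdot10^{24072855}$) are asserted rather than carried out, and your aside that the inequality holds unconditionally for $k\ge8$ is unjustified and not claimed in the paper, which stops at $k=7$ precisely because the constants $\mathcal{C}_\nu$ for larger $\nu$ are impractical to compute and small $q$ are not reachable by this estimate.
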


\begin{proof}
	We split the proof into cases $k \geq 4$, $k=3$ and $k=2$. 
	
	We begin with the case $k \geq 4$.
	In this case, we have $m\geq 2\cdot3\cdot5\cdot7^{k-3}\geq 210$. Since the $d_j$ ’s are at least 2, pairwise relatively prime and $d_i\nmid d_j$ for every $1\leq i < j \leq k$, we have that $\frac{m}{d_i} = \prod_{j\neq i} d_j \geq \prod_{i=1}^{k-1} p_i$. Furthermore, since $\prod_{i=1}^{k-1} p_i\geq k!$ we have that $d_i \leq \lfloor\frac{m}{k!}\rfloor$. Therefore, $q^{\frac{m}{2}-\lambda(\mathbf{d})-D} \geq q^{\frac{m}{2}-2(d_1+d_2+\dots+d_k)+k-1} \geq q^{\frac{m}{2}-2k\lfloor\frac{m}{k!}\rfloor+k-1}$. Thus, combining the above with Ineq.~\eqref{3.1}, it suffices to verify that 
	\[ %\begin{equation}\label{cond5.3}
	q^{\frac{m}{2}-2k\lfloor\frac{m}{k!}\rfloor+k-1} \geq W(q^m-1)\cdot W(x^m-1).
	\] %\end{equation}
	The above, in conjunction with Lemmas~\ref{l2.4} and \ref{l2.5}, yield
	\[ %\begin{equation}
	q^{\frac{m}{2}-2k\lfloor\frac{m}{k!}\rfloor+k-1} \geq \C q^{\frac{m}{\nu}}2^m.
	\] %\end{equation}
	In the case $k=4$, the above holds for $m\geq 210$, $q\geq 1334$, and $\nu=11$. For $k=5$, the above holds for $m\geq 2310$, $q\geq 9$, and $\nu=11$. Proceeding in the same way, for $k=6$ and $k=7$ the above inequality holds for $q\geq 7$ and $q\geq 5$ respectively for suitable values of $\nu$. Finally, we conclude this case by noting that for the cases $k\geq8$ the computations are challenging since the constants $\C$ for higher values of $\nu$ are difficult to calculate within a reasonable time limit. Furthermore, we note that for $k\geq8$ there is very less improvement to the lower bounds on $q$ and hence we stop at $k=7$ for which we have achieved the bound $q \geq5$. 
	
	We move on to the case $k = 3$.
	In this case, we have $m \geq 2\cdot3\cdot 5\geq 30$. Observe that, for $m=30=2\cdot3\cdot5$ and $m=42=2\cdot3\cdot7$, Ineq.~\eqref{3.1} does not hold for any prime power $q$. So, we focus on the case $m \neq 30, 42$, so we assume that $m\geq 60$. If $d_1=3$, $d_2=4$ and $d_3={m}/{12}$, then $d_1+d_2+d_3=3+4+\frac{m}{12}\leq \frac{m}{4}$ for $m \geq 60$.
	
	From Ineq.~\eqref{cond5.2} we have that $d_1 \leq \sqrt[3]{m}$ and $d_2\leq \sqrt{\frac{m}{2}}$. Now for $m\geq 70$, we get that $d_1+d_2+d_3\leq \sqrt[3]{m}+\sqrt{\frac{m}{2}}+\frac{m}{10}\leq \frac{m}{4}$. Thus, Ineq.~\eqref{3.1} yields the sufficient condition
	\[ %\begin{equation}\label{cond_k3}
	q^{2} \geq W(q^m-1)\cdot W(x^m-1).
	\] %\end{equation}
	Then, using Lemmas~\ref{l2.4} and \ref{l2.5}, the above inequality becomes 
	\[ %\begin{equation}
	q^{2} \geq \C q^{\frac{m}{\nu}}2^m.
	\] %\end{equation}
	For $m\geq 60$, the above inequality is valid for $\nu> m/2=30$ and it holds for all prime powers $q \geq 2.2660 \cdot 10^{24072855}$ for $\nu=31$, given that $\mathcal{C}_{31}=2.4015 \cdot 10^{1553069}$.% for $\nu=31$.   
	
	Finally, we focus on the case $k = 2$.
	We divide our discussion into the following cases.
	\begin{enumerate}
		\item For $8 \leq d_1 < d_2$, we have that $d_1 \leq m/8$ and $d_1+d_2\leq m/4$. Then from Theorem~\ref{3.1} it suffices to verify the inequality \[ q^{m/2-2(d_1+d_2)+1}\geq q\geq W(q^m-1)\cdot W(x^m-1). \] Then, Lemma~\ref{l2.6} ensures that we can compute a constant $Q'$ depending on $d_1$, $d_2$ and suitable values of $\nu$ such that Ineq.~\eqref{3.1} holds for all $q\geq Q'$
		\item For $7= d_1 < d_2$, we note that when $(d_1, d_2)=(7, 8)$, Ineq.~\eqref{3.1} does not hold for any prime power $q$.
		For the case $(d_1, d_2)=(7, 9)$, by considering Ineq.~\eqref{3.1}, it suffices to verify that \[ q^{1/2}\geq W(q^m-1)\cdot W(x^m-1) \] and the result follows as above.
		Finally, if $d_2 \geq 10$, we get that $d_2 \leq m/7$ and $d_1 \leq m/10$. In this case, Theorem~\ref{3.1} implies that it suffices to verify the inequality \[ q^{m/70+1}\geq W(q^m-1)\cdot W(x^m-1). \] Again, as above, Lemma~\ref{l2.6} ensures the existence of a computable constant $Q'$, depending on $d_1$, $d_2$ and suitable values of $\nu$, such that the Ineq.~\eqref{3.1} holds for all $q\geq Q'$.
		\item For $6= d_1 < d_2$, we note that when $7 \leq d_2 \leq 11$, Ineq.~\eqref{3.1} does not hold for any prime power $q$. Thus we work on $d_2 \geq 13$ and the result follows in a similar manner as above, from Lemma~\ref{l2.6}. 
		\item For $d_1 \in \{3, 4, 5\}$, Ineq.~\eqref{3.1} does not hold for any prime power $q$. 
	\end{enumerate}
	
\end{proof}

\subsection{Remarks}
\begin{enumerate}
	\item The condition $\gcd(d_i, d_j ) = 1$ is not restrictive for the case $k = 2$. In fact, if $\gcd(d_1, d_2) = d$ and $Q = q^d$, then $\mathbb{F}_{q^{d_i}} = \mathbb{F}_{q^{t_i}}$ where $t_i=d_i/d$ satisfies $\gcd(t_1, t_2)=1$. 
	\item Although we can explicitly compute the values of the constants $Q'$ above in the case $k=2$, these constants are so large as to prohibit the investigation of the situation for the prime powers smaller than $Q'$, by using a computer, within a reasonable time limit. So, we omit them.
	\item Recently, Bagger \cite{bagger1} provided a hybrid bound to attack problems on existence of primitive elements in finite fields. Furthermore, Bagger and Punch in \cite{bagger2}, provided a sieve criterion for primitive elements depending only on the estimate for a related character sum. We believe that an application of these methods adjusted, accordingly for the primitive normal elements, could be applied to this problem for future investigations. 
\end{enumerate}

\section{Acknowledgments}

We are grateful to the anonymous reviewer for their efforts in reviewing our manuscript and their suggestions which resulted in this improved version of the paper.

\section{Declarations}

The authors declare that there is no conflict of interest.
	
%	\bibliographystyle{plain}
%	\bibliography{ref2}

\begin{thebibliography}{10}

\bibitem{bagger1}
Gustav~Kj{\ae}rbye Bagger.
\newblock Hybrid bounds for prime divisors.
\newblock {\em arXiv preprint arXiv:2412.00010}, 2024.

\bibitem{bagger2}
Gustav~Kj{\ae}rbye Bagger and James Punch.
\newblock The modified prime sieve for primitive elements in finite fields.
\newblock {\em arXiv preprint arXiv:2507.21515}, 2025.

\bibitem{cohen1990}
Stephen~D Cohen.
\newblock Primitive elements and polynomials with arbitrary trace.
\newblock {\em Discrete Mathematics}, 83(1):1--7, 1990.

\bibitem{cohen1999}
Stephen~D Cohen and Dirk Hachenberger.
\newblock Primitive normal bases with prescribed trace.
\newblock {\em Applicable Algebra in Engineering, Communication and Computing},
  9(5):383--403, 1999.

\bibitem{hucz2}
Stephen~D Cohen and Sophie Huczynska.
\newblock The strong primitive normal basis theorem.
\newblock {\em Acta Arithmetica}, 143(4):299--332, 2006.

\bibitem{cohen2015consecutive}
Stephen~D Cohen, Tom{\'a}s Oliveira~e Silva, and Tim Trudgian.
\newblock On consecutive primitive elements in a finite field.
\newblock {\em Bulletin of the London Mathematical Society}, 47(3):418--426,
  2015.

\bibitem{costas}
Solomon~W Golomb.
\newblock Algebraic constructions for costas arrays.
\newblock {\em Journal of Combinatorial Theory, Series A}, 37(1):13--21, 1984.

\bibitem{lens}
Hendrik~W Lenstra, Jr. and Ren\'e~J Schoof.
\newblock Primitive normal bases for finite fields.
\newblock {\em Mathematics of Computation}, 48(177):217--231, 1987.

\bibitem{lidlniederreiter97}
Rudolf Lidl and Harold Niederreiter.
\newblock {\em Finite Fields}, volume~20 of {\em Encyclopedia of Mathematics
  and its Applications}.
\newblock Cambridge University Press, Cambridge, second edition, 1997.

\bibitem{morgan1994}
Ilene~H Morgan and Gary~L Mullen.
\newblock Primitive normal polynomials over finite fields.
\newblock {\em Mathematics of Computation}, 63(208):759--765, 1994.

\bibitem{handbook}
Gary~L Mullen and Daniel Panario.
\newblock {\em Handbook of finite fields}, volume~17.
\newblock CRC press Boca Raton, 2013.

\bibitem{difhell}
Christof Paar and Jan Pelzl.
\newblock Public-key cryptosystems based on the discrete logarithm problem.
\newblock In {\em Understanding Cryptography: A Textbook for Students and
  Practitioners}, pages 205--238. Springer, Berlin, Heidelberg, 2010.

\bibitem{reis}
Lucas Reis.
\newblock Counting solutions of special linear equations over finite fields.
\newblock {\em Finite Fields and Their Applications}, 68:101759, 2020.

\bibitem{ribas}
Lucas Reis and Savio Ribas.
\newblock Generators of finite fields with prescribed traces.
\newblock {\em Journal of the Australian Mathematical Society},
  112(3):355–366, 2022.

\end{thebibliography}

\end{document}